\documentclass[dvips,11pt ,twoside]{amsart}  
\usepackage[left=3.1cm,right=3.1cm,top=3.1cm,bottom=3.1cm]{geometry}
\usepackage{graphicx}
\usepackage{multirow}
\usepackage{hyperref}
\usepackage{bbm}
\usepackage{amsmath,amssymb,amsfonts, amsthm,epsfig}
\usepackage{verbatim}
\usepackage{enumerate}

\numberwithin{equation}{section}

\newtheorem{thma}{Theorem}[section]
\newtheorem{lemma}[thma]{Lemma}
\newtheorem{coro}{Corollary}

\renewcommand{\l}{{\lambda}}
\newcommand{\g}{{\gamma}}

\newcommand{\E}{{\mathbb{E}}}
\renewcommand{\P}{{\mathbb{P}}}
\newcommand{\R}{{\mathbb{R}}}

\newcommand{\Z}{{\mathbb{Z}}}

\newcommand{\half}{{\frac{1}{ 2}}}
\newcommand{\e}{{\epsilon}}

\renewcommand{\half}{{\frac{1}{2}}}

\renewcommand{\(}{\begin{equation}}
\renewcommand{\)}{\end{equation}}

\begin{document}

\begin{abstract}
The dimer model on a planar bipartite graph can be viewed as a random surface measure. We study these fluctuations for a dimer model on the square grid with two different classes of weights and provide a condition for their equivalence. In the thermodynamic limit and scaling window,  these height fluctuations are shown to be non-Gaussian.

\end{abstract}

\title{The Height fluctuations of an off-critical dimer model on the square grid}

\author{Sunil Chhita}
\address{Royal Institute of Technology KTH, Stockholm, Sweden}

\maketitle

\section{Introduction} 

In recent years,  two dimensional grid based  statistical mechanical models have been the focus of much research.  These discrete models often possess a {\it critical point}, where there is an abrupt change of behavior, called the {\it phase transition}.    At this point, the models can exhibit macroscopic randomness.  This leads to studying the {\it scaling limit}, i.e. the behavior of the model when the lattice spacing tends to zero.

Before taking the scaling limit,  consider the {\it off-critical model} which is a perturbation from the critical point.  These types of models, in general, do not feature any macroscopic randomness and taking the scaling limit of such a model gives a trivial measure. However, one could consider the behavior when the lattice size tends to zero while simultaneously moving the parameters towards the critical point.  We call this limit a {\it scaling window}.    By choosing an appropriate re-scaling, we often find non-trivial scaling window measures for off-critical statistical mechanical models.  Studying these measures sometimes provides additional information about the critical point behavior.

In this paper, we focus on the dimer model (for the original paper, see \cite{Kas:61}) on a bipartite planar graph, which is a particular two dimensional statistical mechanical model, defined as follows for a finite graph. Call a {\it dimer configuration} to be a subset of edges in which each vertex is incident to one edge.  The {\it dimer model} is a probability measure on the set of all dimer configurations of the underlying graph.  Weights can be applied to the edges so that the probability of a configuration is proportional to the product of the edge weights.

\subsection{Overview of Setup} \label{sec:intro:briefsetup}


Let $G$ be an infinite planar bipartite graph.   As $G$ is bipartite,  we can define a {\it height function} which is in one-to-one correspondence (up to height level) with each dimer configuration (see \cite{Ken:97}).  This gives the viewpoint that the dimer model is the graph of a random function from $\R^2 \to \R$, i.e. a random surface (see \cite{Fun:05, she:05} for more details of random surfaces). A precise definition of the {\it height function} for the dimer model can be found in \cite{Ken:09}.  Informally, the height function can be defined as follows.


Suppose $d$ is the number of edges incident to each vertex and that $d$ is fixed for $G$. The height function is a continuous function from $\R^2$ to $\R$ which is integer valued at the center of each face of $G$.  The height change between the centers of two faces sharing an edge is $\pm (d-1)$ if there is a dimer covering the shared edge or $\mp 1$ if there is no dimer covering that shared edge.  We choose the following sign convention:  suppose that the black vertex is to the left when traversing the shared edge between two faces.  The height change along the traversal is $(d-1)$ if there is a dimer covering the shared edge and -1 otherwise.  With this setup, it can easily be seen that the total height change at the centers of the faces surrounding each vertex is zero.

Define the toroidal graph $G_n$ to be the quotient of $G \slash n \Z^2$, that is, the action of $n \Z^2$ on $G$ by horizontal and vertical translations.  Let $\mathcal{M}(G_n)$ denote the set of all dimer coverings on $G_n$.  Define a probability measure $\mu_n$ for $M \in \mathcal{M}(G_n)$ so that the probability of observing $M$ is proportional to the product of all the edge weights.   For dimer models on periodic graphs, there is a limiting measure $\mu$ when the system size is sent to infinity.  This limiting measure is a {\it Gibbs} measure and is called the {\it thermodynamic limit}.

As mentioned above, the scaling window represents the limiting measure (of an observable) when the lattice spacing tends to zero while simultaneously moving the parameters to the critical point.  We use the following notation for scaling windows: for an order $\e$ perturbation from criticality, the {\it scaling window} $(\e^\alpha,\e^\beta)$ of a measure is the limiting measure when re-scaling the horizontal axis by $\e^\alpha$, the vertical axis by $\e^\beta$ and $ \e \to 0$ for $\alpha,\beta>0$.   In other words, a box consisting of $1/\e^{(\alpha +\beta)}$ lattice points is sent to a box with continuum area 1 under the scaling window  $(\e^\alpha,\e^\beta)$.  In the literature, scaling windows have appeared in different guises such as {\it critical windows} or {\it intermediate phases}.   In Section \ref{sec:intro:related}, we briefly survey some of these results for other two dimensional discrete models and provide some motivating remarks.

\subsection{Results}

 This paper considers two differently weighted dimer models of the square grid. We say that the dimer model is {\it flipped}  if the edge weights alternate between $r_1$ and $r_3$ along a vertical line and between $r_2$ and $r_4$ along a horizontal line.  Let $\mu_f(r_1,r_2,r_3,r_4)$ denote the flipped dimer model with edge weights $r_1,r_2,r_3$ and $r_4$.  We say that the dimer model is {\it drifted} if the edge weights are $s_1, \dots s_4$ for edges surrounding vertices $\mathbb{Z}_e^2$, the even sub-lattice of $\mathbb{Z}^2$.  Let $\mu_d(s_1,s_2,s_3,s_4)$ denote the drifted dimer model with edge weights $s_1,s_2,s_3,s_4$.

\begin{thma} \label{intro:thm:equivmeas}
	 The measures $\mu_f(r_1,r_2,r_3,r_4)$ and $\mu_d(s_1,s_2,s_3,s_4)$ are equivalent in the thermodynamic limit when $s_1=r_1/r_4$, $s_2=r_2^2/(r_1 r_4)$, $s_3=r_3^2/(r_1 r_4)$ and $s_4= r_4/r_1.$
\end{thma}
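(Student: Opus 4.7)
The plan is to approach the equivalence via Kasteleyn theory. In the thermodynamic limit the unique translation-invariant ergodic Gibbs measure of a doubly-periodic bipartite dimer weighting is determined by its inverse Kasteleyn kernel, which is in turn given by a double contour integral of the cofactor-to-determinant ratio of a Kasteleyn matrix $K(z,w)$ built from a fundamental domain. Two weightings induce the same measure precisely when their inverse kernels agree on every pair of vertices; the key flexibility is the gauge freedom $w(bw)\mapsto f(b)g(w)w(bw)$, which leaves the measure invariant since every dimer configuration picks up the same global factor $\prod_b f(b)\prod_w g(w)$.

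First I would pick a common $2\times 2$ fundamental domain containing two black and two white vertices on which both $\mu_f$ and $\mu_d$ are periodic, and write down the Kasteleyn matrices $K_f(z,w)$ and $K_d(z,w)$ as $2\times 2$ matrices with Laurent-polynomial entries in the Fourier variables $z,w$, using a fixed Kasteleyn sign convention for the square lattice.

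The central step is to exhibit a gauge identifying $K_d$ with $K_f$ under the substitution in the theorem. Because the flipped and drifted weightings carry different periodic patterns, a purely $2\times 2$-periodic gauge will not suffice, and I would instead seek a ``tilted'' multiplicative gauge of the form $f(i,j)=\alpha^i\beta^j$ on each bipartite class, which corresponds at the level of the Fourier integral to the rescaling $(z,w)\mapsto(\alpha^2 z,\beta^2 w)$ of the integration contour. The substitution $s_1=r_1/r_4,\ s_2=r_2^2/(r_1 r_4),\ s_3=r_3^2/(r_1 r_4),\ s_4=r_4/r_1$ is exactly the one that realises such a gauge: the ratios $s_1/s_3$ and $s_2/s_4$ determine the gauge parameters $(\alpha,\beta)$ in terms of the $r_i$, and the product relation $s_1 s_4=1$ is precisely the compatibility condition for these two prescriptions to come from a single pair.

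Once the gauge is in place, the rescaling of the contour in the inverse-Kasteleyn integral does not change the integral value (by Cauchy's theorem, since the integrand remains analytic on a neighbourhood of the torus), so $K_d^{-1}=K_f^{-1}$ as kernels on $\Z^2$ and the two Gibbs measures coincide on every cylinder event. The main obstacle will be the central step: verifying that the substitution really does realise a consistent tilted gauge, which boils down to an explicit algebraic matching of $K_d(z,w)$ with a diagonal conjugate of $K_f(\alpha^2 z,\beta^2 w)$. The remaining steps are standard consequences of the Kasteleyn formalism.
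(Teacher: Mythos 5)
Your proposal follows essentially the same route as the paper: write the two Kasteleyn matrices on a common $2\times 2$ fundamental domain and observe that $K_d(z,w)=D_1\,K_f\!\left(\tfrac{r_4}{r_2}z,\tfrac{r_3}{r_1}w\right)D_2$ for diagonal $D_1,D_2$, i.e.\ the two weightings differ by a vertex gauge together with a shift of the magnetic-field coordinates. Two points need tightening, and the first is a genuine gap as stated. A tilted gauge $f(i,j)=\alpha^i\beta^j$ does \emph{not} automatically preserve the Gibbs measure on the infinite lattice: your heuristic that ``every dimer configuration picks up the same global factor'' is only valid on finite graphs, while on $\Z^2$ such a gauge shifts the magnetic field and in general lands on a different ergodic Gibbs measure (of different slope). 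Your appeal to Cauchy's theorem is the right mechanism for ruling this out, but analyticity ``on a neighbourhood of the torus'' is not enough --- you must check that $P_f$ has no zeros on any of the tori swept out between the unit torus and the rescaled one, i.e.\ that the shifted point lies in the closure of the same complementary component of the amoeba of $P_f$ as the origin. This holds here because $\mu_f$ is gaseous, but only barely: $P_d(1,1)=0$, so the target torus actually touches the spectral curve and the deformation needs a limiting argument.

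Second, the conclusion is not that $K_d^{-1}=K_f^{-1}$: the kernels differ by the gauge factors, namely $K_d^{-1}(b_i,w_j(x,y))=c_{ij}\,\alpha^x\beta^y\,K_f^{-1}(b_i,w_j(x,y))$ with $\alpha=r_4/r_2$, $\beta=r_3/r_1$ and constants $c_{ij}$ depending on the vertex classes. The measures nonetheless coincide because in the local statistics formula $\prod_k K(w_k,b_k)\det\left(K^{-1}(b_k,w_l)\right)$ every term of the determinant expansion accumulates a total exponent of $\alpha$ and of $\beta$ equal to zero, so all the position-dependent factors cancel. This cancellation is exactly how the paper closes the argument, and you should carry it out explicitly rather than asserting equality of the kernels.
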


Using the KPW-Temperley correspondence \cite{kpw:00}, the drifted dimer model is in bijection with random directed spanning trees with the root at infinity.  The random spanning tree can be constructed using Wilson's algorithm \cite{Wil:96} with random walk probabilities $(s_1,s_2,s_3,s_4)/(s_1+s_2+s_3+s_4)$ for the north, east, south and west steps.   The flipped dimer model also defines a spanning tree (and dual tree) with the root at infinity but this tree cannot be constructed using Wilson's algorithm with a massive random walk.  It follows from Theorem \ref{intro:thm:equivmeas} that these two trees are equal in distribution.  It is not known whether there is combinatorial construction which gives a bijection between these two trees.

The dimer model $\mu_f(1,1,1,1)$ (or $\mu_d(1,1,1,1)$) is a critical statistical model \cite{Ken:01}.    We consider a perturbation of this dimer model with flipped weights and study the behavior of the height fluctuations in the scaling window.   As a consequence, this provides results regarding the height fluctuations of the drifted dimer model (in the scaling window).  We find that

\begin{thma} \label{intro:thm:non-gauss}

In the thermodynamic limit,  the height fluctuations for the flipped dimer model $\mu_f(1,1-\l_1 \e , 1-\l_2 \e,1)$ in the scaling window $(\e,\e)$ are non-Gaussian.

\end{thma}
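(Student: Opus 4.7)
The plan is to compute the limiting cumulants of height differences in the scaling window and exhibit a cumulant of order $\geq 3$ that does not vanish, thereby ruling out a Gaussian limit. By Theorem~\ref{intro:thm:equivmeas}, $\mu_f(1,1-\lambda_1\epsilon,1-\lambda_2\epsilon,1)$ is equivalent in the thermodynamic limit to the drifted model $\mu_d(1,(1-\lambda_1\epsilon)^2,(1-\lambda_2\epsilon)^2,1)$, whose Kasteleyn operator is translation-invariant on $\mathbb{Z}^2$ with the usual two-vertex bipartite fundamental domain. Working in this drifted representation simplifies the Fourier analysis considerably: the inverse Kasteleyn matrix $K^{-1}(u,v)$ admits a single double-contour integral representation, and the characteristic polynomial $P(z,w)$ is a small perturbation of order $\epsilon$ of the critical polynomial of $\mu_d(1,1,1,1)$.

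By Kasteleyn theory, the $m$-point edge correlation function is a determinantal minor of $K^{-1}$, and its connected version (the joint cumulant) is a cyclic sum of $m$-fold products of $K^{-1}$-entries. Writing the height difference
\[
h(x)-h(y) = \sum_{e \in \gamma_{xy}} \eta_e\, \mathbb{1}_{e \in M}
\]
as a signed sum over edges of a lattice path $\gamma_{xy}$, the $m$-th cumulant of $h(x)-h(y)$ becomes an $m$-fold discrete sum over $\gamma_{xy}$ of connected $m$-point dimer correlations. For non-Gaussianity it then suffices to fix two continuum points $p \neq q$ and verify
\[
\lim_{\epsilon \downarrow 0} \mathrm{Cum}_4\bigl(h(\lfloor p/\epsilon\rfloor) - h(\lfloor q/\epsilon\rfloor)\bigr) \neq 0,
\]
since every Gaussian random variable has vanishing fourth cumulant.

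The core analytic ingredient is the asymptotics of $K^{-1}(u,v)$ when $|u-v| \asymp 1/\epsilon$. At this scale, the zeros of the perturbed spectral curve $P(z,w)$ lie at distance $O(\epsilon)$ inside the unit torus, yielding a correlation length of order $1/\epsilon$ that matches the $(\epsilon,\epsilon)$ window. A steepest-descent analysis of the double contour integral then produces an expansion of the form $K^{-1}(u,v) = \epsilon\, F_{\lambda}(\epsilon(u-v)) + O(\epsilon^2)$, where $F_\lambda$ is an explicit Bessel-type function depending on $(\lambda_1,\lambda_2)$. Inserting this into the fourfold sum and passing to the Riemann-integral limit reduces $\mathrm{Cum}_4$ to a finite continuum integral of a cyclic product of four $F_\lambda$'s along the rescaled segment from $q$ to $p$; non-vanishing is then verified by explicit evaluation at a judicious choice of parameters --- for instance $\lambda_2=0$, where $F_\lambda$ reduces to a one-parameter massive kernel --- and extended to generic $(\lambda_1,\lambda_2)$ by analyticity.

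The main obstacle is the uniformity of the saddle-point asymptotics along the discrete path $\gamma_{xy}$, in particular for quadruples of edges that are close or coincident, where the naive Riemann-sum limit breaks down and must be replaced by direct lattice estimates. Once these uniform error bounds are established, the remaining verification that the limiting continuum cumulant is strictly nonzero is a concrete, self-contained computation.
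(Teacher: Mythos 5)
Your overall strategy --- reduce non-Gaussianity to a fourth-order Wick violation, express connected dimer correlations as cyclic products of inverse Kasteleyn entries, and extract a Bessel-type kernel $K^{-1}(u,v)\approx \epsilon F_\lambda(\epsilon(u-v))$ in the $(\e,\e)$ window --- is exactly the paper's (Lemmas \ref{scale:lem:invkastentries}--\ref{scale:lem:fourpoint}). But there are two genuine gaps. First, your test observable is the wrong one: you take the fourth cumulant of a single pointwise difference $h(\lfloor p/\e\rfloor)-h(\lfloor q/\e\rfloor)$, i.e.\ a fourfold sum over one path $\gamma_{pq}$. The limiting kernel behaves like $B_K(1,|\underline{\l}||z|)\sim 1/(|\underline{\l}||z|)$ and $B_K(0,|\underline{\l}||z|)\sim -\log|z|$ near coincidence, so the continuum integrals you want your Riemann sums to converge to diverge on the diagonal already at second order ($\int\int |v_1-v_2|^{-2}\,dv_1\,dv_2$ over a single segment); on the lattice the short-distance regime is governed by the critical model and the variance of a pointwise height difference grows without bound as $\e\to 0$. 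This is not the ``technical uniformity issue'' you flag at the end --- it forces a change of observable (or a normalization that would then have to be carried through the fourth cumulant). The paper instead takes four pairwise \emph{disjoint} segments $\g_1,\dots,\g_4$ on a common vertical line, so every kernel argument is bounded away from zero, the integrands are bounded, and the Riemann sums converge with no renormalization; the quantity shown to be nonzero is the joint connected four-point function $S_4(\g_1,\dots,\g_4)-\sum\prod S_2(\g_i,\g_j)$, which vanishes for any Gaussian field.

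Second, the step you describe as ``a concrete, self-contained computation'' --- that the limiting connected four-point integral is strictly nonzero --- is the technical heart of the paper's argument: the 4-cycle contribution $f_{(\l_1,\l_2)}$ is a signed sum of twenty-four products of four Bessel functions (equation (\ref{appendix:mess})), and Lemma \ref{appendixlemma} devotes the entire appendix to exhibiting intervals on which it is provably positive, via explicit upper and lower bounds on the negative and positive parts. Your proposed shortcut (evaluate at $\l_2=0$, extend to generic $(\l_1,\l_2)$ by analyticity) does not deliver the theorem as stated: analyticity only rules out vanishing off a discrete set of parameter values, whereas the claim is non-Gaussianity for each fixed $(\l_1,\l_2)$. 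This particular gap is repairable --- the limiting kernel depends on $(\l_1,\l_2)$ only through $|\underline{\l}|\,|z|$, so rescaling the test segments trades one value of $|\underline{\l}|$ for another --- but you still need one honest nonvanishing verification of a 24-term Bessel sum, and that is where the real work lies.
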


The proof of non-Gaussian fluctuations is based on showing that the fourth moments of the height function cannot be written as the sum of a product of paired moments, i.e. does not satisfy Wick's Theorem.   Note that in the limit as $\l_1$ and $\l_2 \to 0$, the fluctuations are Gaussian and are given by the Gaussian Free Field \cite{Ken:00}.  If $\l_1$ and $\l_2 \to \infty$, the fluctuations are given by White Noise.  The probability distribution of the fluctuations of the height function of $\mu_f(1,1-\l_1,1-\l_2 \e,1)$ under the scaling window $(\e,\e)$ remains an open problem.

\subsection{Related Results and Motivating Remarks} \label{sec:intro:related}

We provide some of the (recent) results that are closely connected with scaling limits and windows of the dimer model and random directed spanning trees.

Richard Kenyon, in \cite{Ken:00} and \cite{Ken:01}, showed that the scaling limit of the height fluctuations of the  critical dimer model on the square grid is  conformally invariant and is given by a  Gaussian Free Field  (see  \cite{she:07} for more details on the GFF).   This is the same measure if we take the scaling window $(\e,\e)$ followed by the limit $\l_1, \l_2 \to 0$ for $\mu_f(1,1-\l_1 \e,1-\l_2 \e,1)$.  Recently, there have been many different results such as \cite{SS:09} proving that the contours of the discrete Gaussian free field converge to SLE(4) (Schramm Loewner Evolution) in the scaling limit.   SLE(4) is also the conjectured distribution of paths formed from the scaling limit of the critical double dimer model.  Excellent surveys of SLE can be be found in \cite{law:05} and \cite{Wen:04}.

The uniform spanning tree has been of considerable interest and many results can be found in \cite{Blps:01}.  The KPW-Temperely bijection \cite{kpw:00} shows that the uniform spanning tree is in bijection with the critical dimer model.  This bijection allows some further computations, which are accessible using the dimer model technology.  Indeed, \cite{Kenb:00} provided the first rigorous computational evidence  that the uniform spanning tree has some conformal structure.  It has been shown that the scaling limit of the Peano curve of the 2D uniform spanning tree is SLE(8) (\cite{sch:00}).

Scaling windows of other discrete models have also been considered.  In the literature, these have often been called near-critical or intermediate phase.  For 2D site percolation on a triangular lattice, \cite{nw:09} classified the different scaling windows while \cite{Cam:09} shows that there are an infinite number of small loops for near-critical percolation under a certain choice of scaling window.

There exists scaling windows of models whose scaling limits give SLE.  These objects are called Massive SLE  \cite{MS:09}.  They can be constructed by perturbing the fugacity and performing the scaling window with respect to the fugacity.  For example, the off-critical LERW features loops with a small weight and the lattice is then re-scaled while simultaneously sending the small weight to zero (see \cite{BBK:08}).  It is not known whether the scaling windows of the dimer models studied in this paper are connected with Massive SLE.

Non-Gaussian fluctuations are not uncommon for 2D statistical mechanical models perturbed from criticality.  
For example, the so-called 2D $\phi^4$-model has non-Gaussian fluctuations in the scaling limit \cite{BFS:83}.

The motivation behind Theorem \ref{intro:thm:non-gauss} was to find properties of the height fluctuations between the scaling limit of the critical and off-critical dimer model on the square grid.  In order to determine whether the off-critical model under re-scaling provided some non-trivial behavior, we needed to send the parameters of the model to the critical point while simultaneously sending the mesh size to zero which is defined as the scaling window in Section \ref{sec:intro:briefsetup}.  The choice of the scaling window parameter exponents $\alpha$ and $\beta$ are chosen so that the correlation length of height function is finite under re-scaling.  Other symmetric choices of the scaling window parameter exponents leads to the  fluctuations governed by either the Gaussian Free field or White Noise depending on whether the re-scaling is too slow or too fast.

Finally, many physical models are studied away from the critical point using  perturbation theory with the goal that they belong to some universality class.  For an example related to the dimer model on the square grid,    \cite{Las:97} considers the six-vertex model close to the critical point and shows the scaling under a certain limit is the same as the critical sine-Gordon model \cite{Luk:97}.  We believe that the model studied in this paper belongs to the universality class of free field theory deformed by a certain operator but the exact details of this remains open.

\subsection{Overview of the Paper}
The paper is organized as follows. In Section \ref{sec:setup}, we provide some background information about the dimer model, such as, the Kasteleyn method.  The proof of Theorem \ref{intro:thm:equivmeas}, the bijections of the discrete models as well as the operators for the underlying random walks are given in Section \ref{sec:discrete}.  In Section \ref{sec:scalingwindow}, we compute the relevant Green's functions for the operators, which allow us to prove Theorem \ref{intro:thm:non-gauss}. In the same section, we also provide the amoeba interpretation of the scaling window.

\section{Setup} \label{sec:setup}

\subsection{Partition Function, Local Probabilities and Notation}  \label{dimer:sect:partition}

We refer to the graph $G_1$ as the fundamental domain. Orient the edges of $G_1$  so that the number of counter clockwise edges per face is odd.     This is called the {\it Kasteleyn orientation}.    Assume that $G$ (the infinite planar graph) has the periodic Kasteleyn orientation induced by $G_1$.  As $G$ is $\mathbb{Z}^2$ periodic, we can write any vertex of $G$ as a vertex of $G_1$ plus  a translation of the fundamental domain, where the first co-ordinate of the translation is in the direction of the vector $(1,0)$ and the second co-ordinate is in the direction of the vector $(0,1)$ (i.e.  $v+(x,y)$ is the vertex $v$ in the fundamental domain translated by  $(x,y)$).  For example, the fundamental domain of the square octagon lattice with the diagonal edges assigned weight  $t$ and off diagonal edges assigned weight $1$ is depicted in Figure \ref{setup:fig:fund}.

\begin{figure}[h!]
\begin{center}
\includegraphics[height=3in]{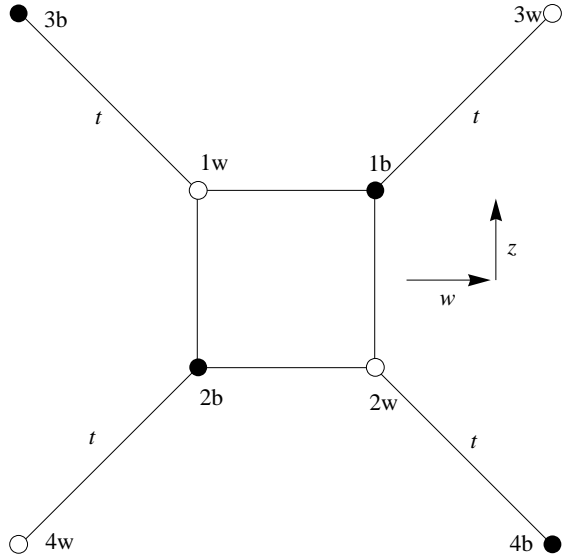}
\caption{Fundamental Domain of the square octagon lattice.  Take Kasteleyn orientation with the convention that all the edges are directed from white vertices to black vertices except for the edges $(w_1 , b_2)$ and $(w_3, b_4)$ which are both oriented in the reverse direction. }
\label{setup:fig:fund}
\end{center}
\end{figure}

Let $\mathtt{B}$ and $\mathtt{W}$ represent the white and black vertices of a bipartite graph. For any bipartite graph, define a weighted adjacency matrix by $K: \mathtt{W} \times \mathtt{B} \to \R$, with $K(w,b)$ given by the weight and direction of the edge $wb$.  This matrix is called the {\it Kasteleyn Matrix}.  For translational invariant graphs, it is easier to work with a Fourier transform as this gives information about the {\it free energy} and provides a tractable way of computing edge probabilities.   We now setup the Kasteleyn matrix on the torus using the method outlined in \cite{KOS:06}. 

Let $V(G_1)=\{w_i,b_j\}$ be the set of vertices of $G_1$ labelled as per Figure \ref{setup:fig:fund}. Let $\g_{1}$ (and resp. $\g_{2}$) be a path in the  dual of $G_1$, winding around the torus in the direction of the vector $(1,0)$  ((0,1)).   Write $e_{ij}$ for the weight of the edge $\overrightarrow{w_i b_j}$. For parameters $z$ and $w$, let $b_{ij}=z^{n_1}w^{n_2} e_{i j}$, where 
\begin{align}
	n_1 (n_2)=		\left( \begin{array}{ll} 
		1 & \overrightarrow{w_i b_j} \mbox{ crosses } \g_1 (\g_2) \mbox{ from below} \\
		0 & \overrightarrow{w_i b_j} \mbox{ does not cross } \g_1 (\g_2)  \\
		-1 & \overrightarrow{w_i b_j} \mbox{ crosses } \g_1 (\g_2) \mbox{ from above} \\
\end{array} \right.
\end{align}

Define $\tilde{K}_1$ to be the weighted adjacency matrix of $G_1$ with entry $(i,j)$ given by the weight connecting the $i^{th}$ white vertex to the $j^{th}$ black vertex and sign determined by the Kasteleyn orientation.  This means,
\begin{align}
	(\tilde{K}_1(z,w))_{i,j} = \left\{ \begin{array}{ll}
				b_{ij} & \mbox{if } w_i \sim b_j, \mbox{ and } w_i \to b_j \\
				-b_{ij} & \mbox{if } w_i \sim b_j, \mbox{ and } b_i \to w_j \\
				0 & \mbox{otherwise} \end{array}\right.
\end{align}
where $w_i \sim b_j$ means that $w_i$ and $b_j$ share an edge and $v_i \to v_j$ denotes an arrow from $v_i$ to $v_j$ in the Kasteleyn orientation.   For the above square octagon example, we have 
\begin{align}
	\tilde{K}_1(z,w)=
		\left( \begin{array}{cccc}
			1 &  -1 & t & 0 \\ 
			1 & 1 & 0 & t \\ 
			t & 0 & w & z \\ 
			0 & t & -\frac{1}{z} & \frac{1}{w}. 
\end{array}
\right)
\end{align}

The relevance of the above discussion is that we can write the partition function and probabilities of observing a local configuration  in the thermodynamic limit as expressions involving $\tilde{K}_1(z,w)$ (see \cite{KOS:06}).  

The characteristic polynomial for the fundamental domain, $P(z,w)$, is defined to be the determinant of $\tilde{K}_1(z,w)$ (see \cite{KOS:06}). For the square octagon example, this is given by
\(
	P(z,w)=4 + t^4 - \frac{t^2}{w} -t^2 w - \frac{t^2}{z} - t^2 z \label{sec:setup:charpoly}
\)
We can write the {\it free energy} as the following:
\begin{align}
 	\log Z =\frac{1}{(4 \pi ^2)} \int_{|z|=1} \int_{|w|=1} \log P(z,w) \frac{ dw}{w} \frac{dz}{z} \label{sec:setup:logZ}
 \end{align}

We can now give the {\it local statistics formula} \cite{Ken:97} which is given by the inverse Kasteleyn entries.   Let $E=\{e_1,\dots,e_l\}$ be a set of edges with $e_j=(w_j^*,b_j^*)$. 
\(
	\P(e_1, \dots, e_m) = \left( \prod_{j=1}^m \tilde{K}(w_j^*,b_j^*) \right) \det \left( (\tilde{K}^{-1} (b_i^*,w_j^*) \right))_{1 \leq i ,j \leq  m} \label{setup:localstats}
\)
where, assuming $b$ and $w$ are in the same fundamental domain,
\(
	K^{-1} (b,w+(x,y)) = \frac{1}{(2 \pi i)} \int_{\mathbb{T}^2} \tilde{K}^{-1} (z,w)_{b,w} w^{-x} z^{-y} \frac{dw}{w} \frac{dz}{z}.
\)

\subsection{Gauge Transformation and P-equivalence} \label{subsec:gauge}

We say that two weight functions, $v_1$ and $v_2$, are {\it gauge equivalent} if there are functions on the white vertices and black vertices, $f_1$ and $f_2$, such that for each edge $e=wb$,  $v_1(e)= f_1(b) f_2(w) v_2(e)$.  The mapping from $v_1$ and $v_2$ is called a {\it gauge transformation} and is measure preserving. 

We say that two dimer models with periodic weights are {\it P-equivalent} if the edge weights of the local configurations of each dimer model differ by a multiplicative constant dependent on the locations of each local configuration.   It is not necessarily true that P-equivalence preserves the measure.   

An example of P-equivalence is the following: consider a dimer model on the square grid which has edge weights $s^x$ for edges incident to the vertices $(2x,2y)$, edge weight $s^{x-1}$ for the edge connecting the vertex $(2x-2, 2y+1)$ to the vertex $(2 x-1,2y+1)$ and edge weights $s^x$ for the rest of the edges incident to the vertices $(2x-1,2y+1)$ for $x,y \in \Z$ and $s\not = 1$.   This dimer model is P-equivalent to the dimer model on the square grid with each edge having weight $s$ as shown in Figure \ref{ch1:intro:pic:localgauge}. Note that this is not a gauge transformation.

\begin{figure}[h!]
\begin{center}
\epsfsize220pt
$$\scalebox{1}{\includegraphics{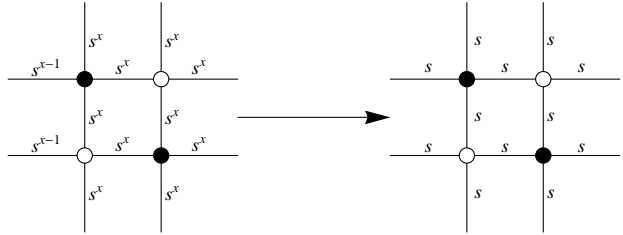}}$$
\caption[Local Gauge Transformation]{An example of P-equivalence.}
\label{ch1:intro:pic:localgauge}
\end{center}
\end{figure}

\subsection{Amoebas and Phase Diagrams}

The {\it amoeba} of $P(z,w)$, denoted $\mathbb{A}(P)$, is defined to be the set $\{(\log|z|,\log|w|) : P(z,w)=0 \}$ which forms curves in $\R^2$.  These curves split $\R^2$ into different regions, with each region describing the phase behavior of the measure of the height function for the dimer model, $\mu$, with characteristic polynomial $P(z,w)$, on some bounded domain.  For $\mu$, the height differences can be deterministic ({\it frozen}), stochastic with unbounded variance ({\it liquid}) or stochastic with bounded variance ({\it gaseous}).    It has been shown \cite{KOS:06}, that $\mu$ is frozen, liquid or gaseous, in the closure of the unbounded component of $\R^2 \backslash \mathbb{A}(P)$, in the interior of $\mathbb{A}(P)$ or in the closure of the bounded component of $\mathbb{A}(P)$.  The origin provides the specific location of the measure.  A more in depth discussion can be found in \cite{KOS:06,Ken:09}.   

An example of an amoeba the dimer model on the square octagon lattice, (\ref{sec:setup:charpoly}), can be found in Figure \ref{sec:fig:sqoctamoeba}.  For $t<\sqrt{2}$, the phase diagram features solid, liquid and gaseous phases  and the measure is in the gaseous phase.  At $t=\sqrt{2}$, there is no gaseous phase and the measure is in the liquid phase. The height fluctuations in the scaling limit are given by a Gaussian Free Field (\cite{kpw:00, Ken:01, KOS:06}).   For $t=\sqrt{2-\e}$, the height fluctuations in the scaling window $(\e,\e)$ represent the transition between the liquid and gaseous phases.

\begin{figure}[h!]
\begin{center}
\epsfsize220pt
$$\scalebox{0.75}{\includegraphics{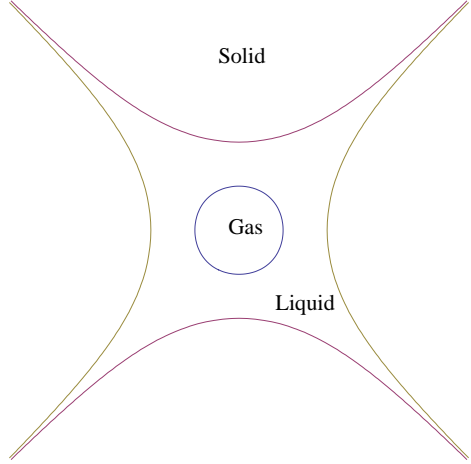}}$$
\caption{The amoeba of the dimer model on the square octagon lattice with characteristic polynomial (\ref{sec:setup:charpoly}), with phases labelled.}
\label{sec:fig:sqoctamoeba}
\end{center}
\end{figure}

\subsection{Some Notation}

For the dimer models on the square grid, we have the following notation: Let $\mathtt{W}$ and $\mathtt{B}$ denote the white and black vertices.  Let $\mathtt{B}_0$ denote the black vertices $\{(2x,2y): x,y \in \Z\}$, $\mathtt{B}_1$ denote the black vertices $\{(2x+1,2y+1): x,y \in \Z\}$,$\mathtt{W}_0$ denote the white vertices $\{(2x+1,2y): x,y \in \Z\}$ and $\mathtt{W}_1$ denote the white vertices $\{(2x,2y+1): x,y \in \Z\}$.



\section{The Equivalence of $\mu_f$ and $\mu_d$} \label{sec:discrete}

In this section, we prove Theorem \ref{intro:thm:equivmeas}.   By setting $s_1=r_1/r_4$, $s_2=r_2^2/(r_1 r_4)$, $s_3=r_3^2/(r_1 r_4)$ and $s_4=r_4/r_1$, the measures $\mu_f(r_1,r_2,r_3,r_4)$ and $\mu_d(s_1,s_2,s_3,s_4)$ are P-equivalent, as shown in Figure \ref{setup:fig:localgauge}.

\begin{figure}[h!]
\begin{center}
\epsfsize220pt
$$\scalebox{1}{\includegraphics{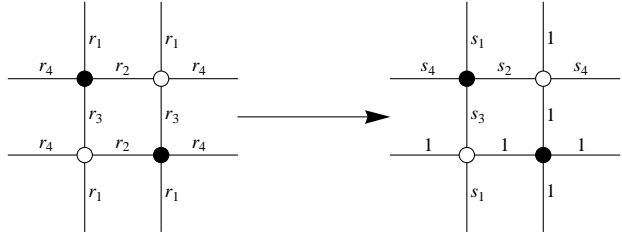}}$$
\caption[P-equivalence between flipped and drifted dimer model]{P-equivalence between the flipped dimer model to the drifted dimer model with $s_1=r_1/r_4$,  $s_2=r^2_2/(r_4 r_1)$, $s_3=r_3^2/(r_1 r_4)$ and $s_4=r_4/r_1$.}
\label{setup:fig:localgauge}
\end{center}
\end{figure}

Using the Fourier Transform method, the characteristic polynomials for both models can be found and are given by 
\(
	P_f(z,w)=r_2 r_4 \left(w+w^{-1} \right) + r_1 r_3 \left(z+z^{-1} \right)-(r_1^2+r_2^2 +r_3^2+r_4^2)
\)
for the flipped dimer model and
\(
	P_d(z,w)=s_2 w + s_4 w^{-1}+s_3 z +s_1z^{-1}-(s_1+s_2+s_3+s_4)
\)
for the drifted dimer model where $s_1=r_1/r_4$,  $s_2=r^2_2/(r_4 r_1)$, $s_3=r_3^2/(r_1 r_4)$ and $s_4=r_4/r_1$.  It is clear that 
\(
	P_d(z,w)=\frac{1}{r_1 r_4}P_f\left(\frac{r_3}{r_1} z , \frac{r_4}{r_2} w \right).
\)
Examples of the amoebas of $P_d$ and $P_f$ are depicted in Figure \ref{therm:fig:amoebas}.

\begin{figure}[h!]
\begin{align*}
\begin{array}{cc}
\scalebox{0.6}{\includegraphics{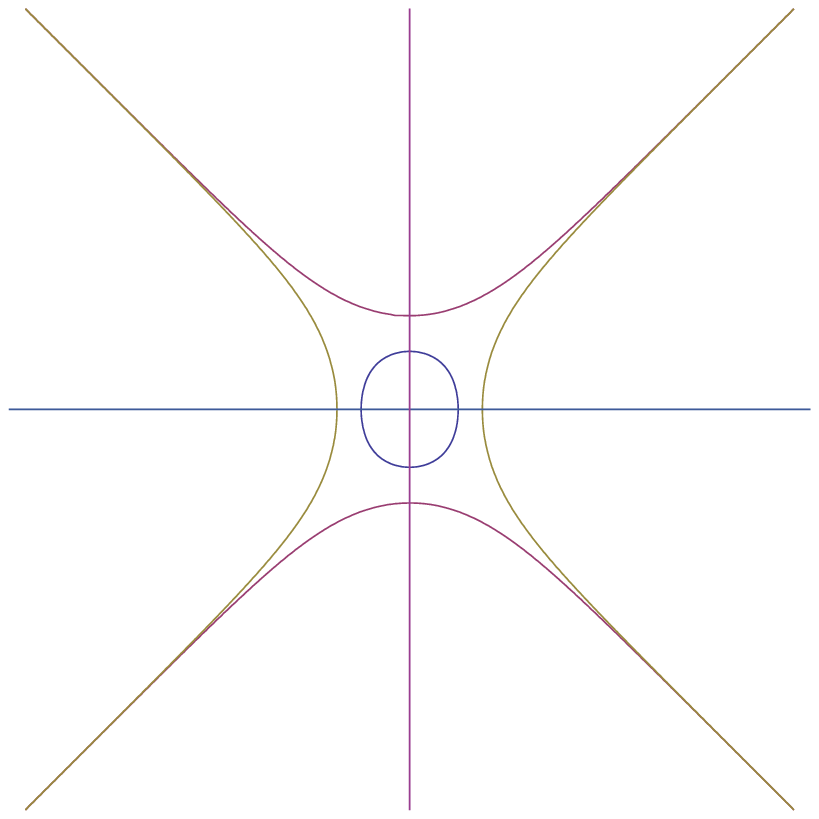}} &
\scalebox{0.6}{\includegraphics{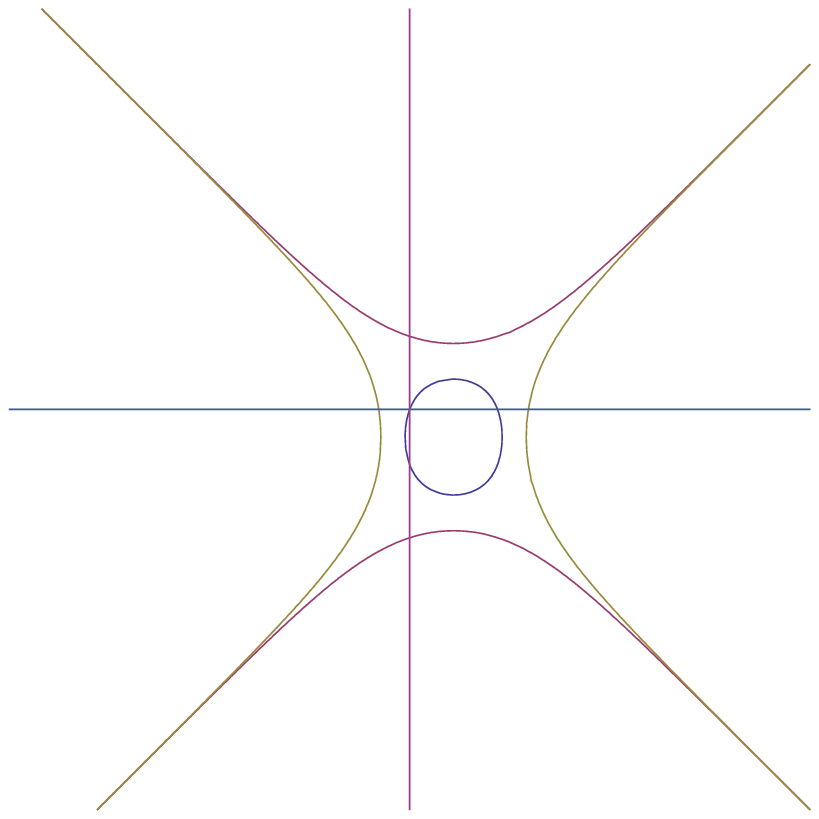}}
\end{array}
\end{align*}
\caption[Amoebas for the flipped and drifted dimer models]{The amoebas for the flipped dimer model (left) and the drifted dimer model (right).  The horizontal and vertical lines are the $x$ and $y$ axis. }
			\label{therm:fig:amoebas}
\end{figure}

\begin{proof}[Proof of Theorem \ref{intro:thm:equivmeas}]

Let $b_i \in \mathtt{B}_i$ and  $w_i \in \mathtt{W}_i$ for $i \in \{0,1\}$.  The fundamental domain for the square grid and the Kasteleyn orientation are given in Figure \ref{sqgrid:kastorient}.  Let $K_f(z,w)$ and $K_d(z,w)$ be the Kasteleyn matrices for the flipped and drifted dimer models, where the rows are indexed by $w_0$ and $w_1$ and the columns are indexed by $b_0$ and $b_1$.   These are given by
\begin{align}
	K_f(z,w)= \left( \begin{array}{cc}
	r_2 -r_4 z & r_3 -r_1 w \\
	r_3 - r_1 w^{-1} & -r_2+r_4 z^{-1}
	\end{array} \right)
\end{align}
and
\begin{align}
	K_f(z,w)= \left( \begin{array}{cc}
	s_2 -s_4 z & 1 - w \\
	s_3 - s_1 w^{-1} & -1+ z^{-1}
	\end{array} \right)
\end{align}
where the signs are determined by the Kasteleyn orientation and the appropriate edge weights can be found in Figure \ref{setup:fig:localgauge}

\begin{figure}[h!]
		\begin{center}
		\scalebox{0.6}{\includegraphics{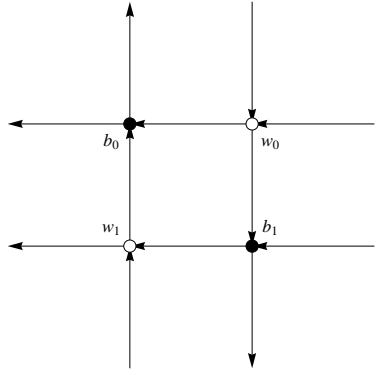}}
		\caption[Kasteleyn Orientation for the Square Grid.]{The Kasteleyn orientation and vertex labels for the square grid.}
		\label{sqgrid:kastorient}
		\end{center}
	\end{figure}

We have that
\(
	K_d(z,w)=D_1 . K_f\left(\frac{r_4}{r_2} z, \frac{r_3}{r_1} w \right) . D_2
\)
where
\begin{align}
	D_1= \left( \begin{array}{cc}
	\frac{1}{r_1 r_4} & 0 \\
	0&\frac{r_3}{r_2 r_1 r_4} 
	\end{array} \right)
\end{align}
and
\begin{align}
	D_2= \left( \begin{array}{cc}
	r_2 & 0 \\
	0 & \frac{r_1 r_4}{r_3}  \end{array} \right).
\end{align}

Let $\alpha=r_4/r_2$ and $\beta=r_3/r_1$ and let $K^{-1}_f$ and $K^{-1}_d$ represent the flipped and drifted inverse Kasteleyn matrices respectively.  Suppose that $w_i(x,y)$ represents  $w_i$ in the $(x,y)$ fundamental domain from $b_j$ for $i,j \in \{0,1\}$. In order to distinguish between the vertices, write $K^{-1}_f (b_i, w_j(x,y))$ for the inverse Kasteleyn entry between $b_i$ and $w_j(x,y)$.  We have
\(
	K^{-1}_d(b_0,w_0(x,y))=\frac{ \alpha^{x-1} \beta^y}{ r_1} K^{-1}_f(b_0,w_0(x,y)),
\)
\(
	K^{-1}_d (b_0,w_1(x,y))=\frac{\alpha^x \beta^{1+y}}{r_4}  K^{-1}_f(b_0,w_1(x,y)),
\)
\(
	K^{-1}_d (b_1,w_0(x,y))=\frac{\alpha^x \beta^{y-1}}{r_1}  K^{-1}_f(b_1,w_0(x,y)),
\)
and
\(
	K^{-1}_d (b_1,w_1(x,y))=\frac{\alpha^{x+1} \beta^{y}}{r_4}  K^{-1}_f(b_1,w_1(x,y)).
\)
It remains to check that any configuration of edges is equivalent using the local statistics formula (\ref{setup:localstats}).  The above values have the property that for $i,j \in \{0,1\}$,
\(
	w_k(e)K^{-1}_f (b_i,w_j(x,y))=w_d(e) \alpha^x \beta^y K^{-1}_d(b_i,w_j(x,y)) \label{thermo:eqn:multiplic}
\)
where $e$ is the edge incident to $b_i$ in the same fundamental domain, $w_k$ and $w_d$ are the edge weights of $e$ in the flipped and drifted dimer models.

For the plane, each entry in the inverse Kasteleyn matrix for the flipped dimer model is the same as the drifted dimer model up to a multiplicative factor dependent on the distance of the two vertices (\ref{thermo:eqn:multiplic}).   Each combination of edges is given by a determinant of the inverse Kasteleyn matrix.   However, the sum  of the exponents  (in the drifted dimer model) for each term in the expansion of the determinant is zero.  Hence, the determinant expansions for the flipped and drifted dimer models are the same.   Therefore, for any $A\in \mathcal{F}$, $\mu_f (A)=\mu_d(A)$. This means that on the plane $\mu_f = \mu_d$ almost surely.

\end{proof}

An immediate corollary of Theorem \ref{intro:thm:equivmeas} is

\begin{coro}
The height function defined from $\mu_f$ has the same distribution as the height function defined from $\mu_d$.
\end{coro}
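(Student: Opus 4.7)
The plan is to deduce the corollary as an essentially formal consequence of Theorem \ref{intro:thm:equivmeas}, noting that the height function is a measurable functional of the underlying dimer configuration.

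First I would recall the definition of the height function from Section \ref{sec:intro:briefsetup}: once one fixes a reference face $f_0$ and assigns it height $0$, the height at any other face is determined by summing the prescribed height increments ($\pm(d-1)$ or $\mp 1$) along a path of edges in the dual graph joining $f_0$ to that face. Because these increments are determined by the dimer configuration (whether the dual edge is crossed by a dimer or not), the height function $h \colon \mathbb{R}^2 \to \mathbb{R}$ is a deterministic measurable functional $h = \Phi(M)$ of the configuration $M$, once the reference is fixed. In particular, for any finite collection of faces $f_1,\dots,f_k$, the vector $(h(f_1)-h(f_0),\dots,h(f_k)-h(f_0))$ is a measurable function of $M$ on the Borel $\sigma$-algebra generated by edge-occupation events.

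Next I would invoke Theorem \ref{intro:thm:equivmeas}: under the parameter correspondence $s_1=r_1/r_4$, $s_2=r_2^2/(r_1 r_4)$, $s_3=r_3^2/(r_1 r_4)$, $s_4=r_4/r_1$, the thermodynamic limits $\mu_f$ and $\mu_d$ coincide as probability measures on dimer configurations of the plane (indeed the proof concludes with $\mu_f(A)=\mu_d(A)$ for every cylinder event $A$, hence for all $A \in \mathcal{F}$ by a standard monotone class / $\pi$-$\lambda$ argument). Consequently, for every measurable functional $\Phi$ of the configuration, $\Phi$ has identical distribution under $\mu_f$ and $\mu_d$.

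Applying this to the height functional established in the first paragraph yields the corollary: for any finite family of face centers, the joint law of the height increments agrees under $\mu_f$ and $\mu_d$, and hence the two height functions (with a common choice of reference face) have the same law as random functions on the face centers. There is essentially no obstacle beyond being careful about the ambiguity of the additive constant in the definition of $h$; since the height function is only defined up to this global additive constant, one either fixes the reference explicitly (in which case the equality of laws is exact) or phrases the statement in terms of height differences (which are canonically defined), and both formulations are immediate from the equality $\mu_f=\mu_d$.
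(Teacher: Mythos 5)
Your proposal is correct and matches the paper's (implicit) reasoning: the paper presents this as an immediate corollary of Theorem \ref{intro:thm:equivmeas} precisely because the height function is a measurable functional of the dimer configuration and the two measures coincide on the plane. Your added care about the reference face and the additive-constant ambiguity is a sensible elaboration of the same argument.
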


\subsection{Bijections with other Models} \label{sec:discrete:bij}


Denote $\mathtt{ST}(n,e,s,w)$ to be the (random) directed spanning tree with weights $n=n(y_1,y_2)$ upwards, $e=e(y_1,y_2)$ eastwards, etc.  These weights can be dependent on the position of the edge.  The language of spanning trees is similar to the language of the dimer model and so to avoid ambiguity, we refer to the vertices and directed edges of the directed spanning trees as nodes and arrows.

The drifted dimer model, $\mu_d(s_1,s_2,s_3,s_4)$ is equal in distribution with $\mathtt{ST}(s_1,s_2,s_3,s_4)$.   The flipped dimer model, $\mu_f(r_1,r_2,r_3,r_4)$, is equal in distribution to another spanning tree model, which is gauge equivalent to $\mathtt{ST}(s_1^{y_1+y_2},s_2^{y_1+y_2},s_3^{y_1+y_2},s_4^{y_1+y_2})$ which is P-equivalent to $\mathtt{ST}(s_1,s_2,s_3,s_4)$.

Let $\mu_{SO}(t)$ denote the measure of the dimer model on the square octagon lattice (with a specified boundary conditions) with edge weights $t$ on the diagonal edges and edge weight 1 on the remaining edges.  The case $t=\sqrt{2}$ corresponds to the dimer model on the square octagon lattice being at criticality.    Let $t=s^2/2$.

\begin{lemma} \label{thermo:lem:squareoct}
$\mu_{SO}(s^2/2)$  with any boundary conditions is equal in distribution to $\mu_f(1,s,s,1)$ and P-equivalent to $\mu_d(1,s^2,s^2,1)$ and $\mathtt{ST}(1,1,s^2,s^2)$, with the appropriate boundary conditions.
\end{lemma}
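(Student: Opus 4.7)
The lemma bundles three assertions, two of which I can handle with tools already in hand. For the P-equivalence of $\mu_f(1,s,s,1)$ with $\mu_d(1,s^2,s^2,1)$, the plan is to invoke Theorem~\ref{intro:thm:equivmeas} directly with $r_1=r_4=1$ and $r_2=r_3=s$: the formulas give $s_1=s_4=1$ and $s_2=s_3=s^2$, exactly the claimed parameters (and in fact yielding the stronger statement of equivalence in the thermodynamic limit). Composition with the first claim then gives the P-equivalence of $\mu_{SO}(s^2/2)$ with $\mu_d(1,s^2,s^2,1)$. For the P-equivalence with $\mathtt{ST}(1,1,s^2,s^2)$, my plan is to apply the KPW-Temperley correspondence from Section~\ref{sec:discrete:bij}, accounting for the cyclic permutation of weights coming from the standard rotation convention between a drifted dimer model on the square grid and its associated directed spanning tree.

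The heart of the proof is the distributional equality $\mu_{SO}(s^2/2)\stackrel{d}{=}\mu_f(1,s,s,1)$. The plan is to apply the urban renewal (``spider'') move simultaneously to every small square of the square-octagon lattice. Each such small square has four internal edges of weight $1$ and four diagonal edges of weight $t=s^2/2$ to the neighboring small squares; its restriction of a perfect matching falls into one of three types, namely all-internal (two matchings, weight $1$ each), mixed (four matchings with one internal dimer and two incident diagonals, each contributing $t^2$), and all-external (one matching, weight $t^4$). I would sum over the internal degrees of freedom of each small square, producing effective weights on the four external diagonals, and then identify the resulting contracted graph with the square grid. The specific value $t=s^2/2$ is precisely what is required so that the effective weights on the contracted lattice reduce to the flipped pattern $(r_1,r_2,r_3,r_4)=(1,s,s,1)$ rather than to some more general periodic pattern.

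The hard part will be upgrading the local move to an equality of measures. Urban renewal is a priori weight-preserving only up to a multiplicative constant per small square; to obtain equality in distribution rather than merely P-equivalence, I would absorb these constants by a gauge transformation, which by Section~\ref{subsec:gauge} is measure-preserving, and then check that the resulting weight function on the contracted lattice agrees precisely with the flipped weights $(1,s,s,1)$. A secondary bookkeeping issue is that ``any boundary conditions'' in the lemma statement requires checking that the move commutes with fixing dimers on the boundary of a finite domain, which should reduce to applying the local identity only to small squares lying well inside the domain. Once these checks are in place, chaining the three ingredients together yields the full set of equivalences.
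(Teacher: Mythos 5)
Your proposal follows essentially the same route as the paper: urban renewal on the poor cities with the same three-case local weight check (all-internal, mixed, all-external) to identify $\mu_{SO}$ with the flipped square-grid model, followed by the gauge/P-equivalence relations of Section~\ref{sec:discrete} and the KPW--Temperley correspondence for the drifted model and the spanning tree. The only cosmetic difference is that no gauge transformation is needed to absorb the per-city multiplicative constant---every perfect matching saturates every gadget exactly once, so once you have checked the constant is the same across the three local configuration types (which is exactly the role of the stated relation between $t$ and $s$), it is a global factor that cancels in the probability measure.
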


\begin{proof}
Most of the proof follows from the corresponding result for the uniform measure in \cite{kpw:00}.  However, we do provide details in obtaining the appropriate weights of the spanning tree.   The first step is to decompose the square octagon lattice to a square grid using urban renewal.  For the square octagon lattice labelled in Figure \ref{setup:fig:fund}, allocate the vertices $b_1,w_1,b_2,w_2$ to be {\it rich cities} and squares with the vertices $b_3,w_3,b_4,w_4$ to be {\it poor cities}.  We can decompose each poor city along with the connecting edge and vertex of each of the four neighboring rich cities into a {\it new city} with a different weight, say $s$, along each edge in the new city as shown in Figure \ref{trees:urban}. 

	\begin{figure}[h!]
		\begin{center}
		\scalebox{1}{\includegraphics{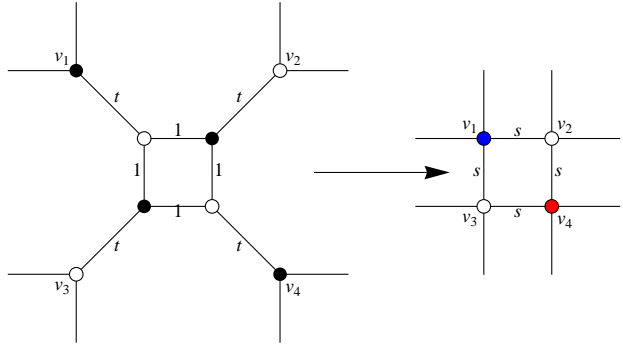}}
		\caption[Urban Renewal for the Square Octagon lattice]{The urban renewal transform, which transforms a poor city (neighbored by 4 rich cities), into a new city, with weights $s$ on the edges}
		\label{trees:urban}
		\end{center}
	\end{figure}

For the edges surrounding a poor city, there are either $0, 2$ or $4$ of the edges covered by dimers.  In the decomposition by urban renewal, these correspond to no edges, 1 edge or two edges covered by dimers in the new city.  As we have set $t=s^2/2$, it is clear that the weights of the local configurations agree. 

We color in blue the upper left hand vertices of each new city.  We color in red the lower right hand vertices of each new city.  The red vertices form the set of nodes for the spanning tree while the blue vertices form the nodes of the associated dual spanning tree.   In order to obtain the appropriate weights on the arrows of the spanning tree, we must first apply a gauge transformation so that all the edges incident to a blue vertex have weight 1.  An instance of this gauge transformation is shown in Figure \ref{trees:urban2}.  The gauge transformed dimer model has different weights depending on the locations of the edges, that is, the dimer model is $\mu_d(s^{-(y_1+y_2)},s^{-(y_1+y_2+2)},s^{-(y_1+y_2+2)},s^{-(y_1+y_2)})$.  It is clear that this dimer model is P-equivalent $\mu_d(1,s^2,s^2,1)$.
	\begin{figure}[h!]
		\begin{center}
		\scalebox{1}{\includegraphics{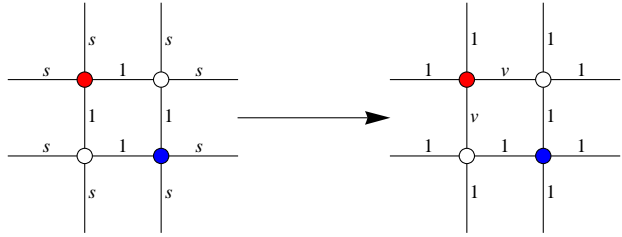}}
		\caption[Gauge transform for the dimer model on the square grid]{The gauge transformation, which applies a transformation on the edge weights.  Here, $v=s^2$.}
		\label{trees:urban2}
		\end{center}
	\end{figure}

\end{proof}

The significance of the colored vertices in the above proof is that the blue vertices are the vertex set of the directed spanning tree and the red vertices are the vertex set of the dual spanning tree.  A dimer covering a blue vertex and a white vertex corresponds to a directed edge from that blue vertex passing through the white vertex to the next blue vertex.

The urban renewal transformation guarantees that the height function defined on the dual of the rich cites is the same as the height function on the dual of the square grid.  More precisely,

	\begin{lemma} \label{trees:urbanunchange}
The urban renewal transformation is a measure preserving bijection for the height function.
	\end{lemma}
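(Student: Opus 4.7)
The strategy is to leverage the dimer bijection established in Lemma \ref{thermo:lem:squareoct} and reduce the preservation of the height function to a purely local verification near each poor city. Since the urban renewal move is supported on a bounded region around each poor city, the faces of the square octagon lattice dual to rich cities are naturally identified with certain faces of the resulting square grid (those unchanged by the transformation). The height function is fully determined by its value at one reference face together with its increments along dual paths, so it suffices to prove that, for any closed loop in the dual encircling a single poor city—equivalently, for any path crossing a poor city between two adjacent rich-city faces—the total height change agrees in both models.

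Concretely, I would fix a poor city and enumerate the possible dimer configurations on it together with its four incident edges. Every perfect matching of the eight vertices involved falls into one of three types: all four connecting edges are covered (no internal poor-city edges used), or two opposite internal edges of the poor city are covered together with two connecting edges, in two symmetric sub-cases. Under the urban renewal bijection, each such configuration corresponds to a specific dimer configuration on the resulting new city of the square grid, and these correspondences are exactly those already matched up (by weights) in the proof of Lemma \ref{thermo:lem:squareoct}.

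For each case, I would compute the height change along a common reference loop encircling the affected region in both lattices. In the square octagon lattice, each crossed edge contributes $+2$ across a dimer and $-1$ otherwise (since every vertex has degree $3$, with the black-on-left convention), while in the square grid the corresponding increments are $+3$ and $-1$ (degree $4$). The loop crosses more edges in the square octagon lattice, because of the octagonal faces between cities, so the check is not pointwise; nevertheless, the total increments must agree in each of the three cases. Once this local equality is verified, concatenation of local comparisons yields the global statement, up to a single additive constant absorbed by normalising at a reference face.

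The main obstacle will be the bookkeeping in the local case analysis: the signs of the increments depend on the colour of each traversed vertex and on the direction of the path, and the two lattices have different face structures, so the reference loops around the poor city on one side and around the new city on the other have to be aligned carefully. However, since the three relevant configurations are precisely the ones already accounted for in Lemma \ref{thermo:lem:squareoct} (where the weights $t=s^2/2$ were fixed so that local probabilities match), this amounts to a finite verification with no further conceptual content.
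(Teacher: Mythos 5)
Your proposal follows essentially the same route as the paper: the paper likewise takes the measure-preservation from the urban renewal step as given and reduces the lemma to verifying, for each local dimer configuration around a poor city, that the height change across the corresponding edges agrees, so your loop-based local bookkeeping is just a more explicit version of that finite check. (One minor slip in your enumeration: a poor city with two covered connecting edges is completed by \emph{one} internal dimer, not two opposite internal edges, but this does not affect the structure of the verification.)
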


\begin{proof}
The height function is defined up to height difference which is determined by the presence of a dimer in the configuration.  As the urban renewal transformation is measure preserving, then it remains to check that the height change across the edges are the same.  This can be validated by checking each local configuration.
\end{proof}


\subsection{Massive and Drifted Operators}

In this subsection we introduce the {\it massive} and {\it drifted} discrete operators.  Applying these operators to the inverse Kasteleyn entries gives a discrete partial differential equation with the boundary conditions given by the graph of the underlying dimer model.  For $v \in \mathtt{W}$, define the Kasteleyn operator by 
\(
	K f(v) = \sum_{u \sim v} K(v,u) f(u)
\)
where $u \sim v$ means that $u$ and $v$ are nearest neighbors, and  $(v,u)^{th}$ entry of the $K(u,v)$ is the Kasteleyn matrix.  For $v \in \mathtt{B}$, define the dual Kasteleyn operator by 
\(
	K^* f(v)=\sum_{u \sim v} K(v,u) f(u)
\)
Let $e_1=(2,0)$ and $e_2=(0,2)$. The {\it massive operator} is given by
\begin{align}
	L_f f&=K^* K f(v) \nonumber \\
	&=r_2r_4 f(v+e_1)+r_2 r_4 f(v-e_2)+r_1r_3 f(v+e_2)+r_1r_3 f(v-e_2) - (r_1^2+r_2^2+r_3^2+r_4^2).  \label{thermo:killedoperator}
\end{align}
Define the shifted graph of $\Z^2$ to be the graph on $\Z^2$ with the weights of the horizontal edges shifted upwards by one lattice spacing and the weights of the vertical edges shifted to the right by one lattice spacing, as seen in Figure \ref{killedanddrift:kkstar}.
	\begin{figure}[h!]
		\begin{center}
		\scalebox{1}{\includegraphics{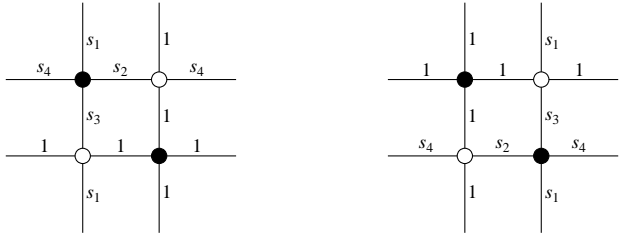}}
		\caption{The original graph and the shifted graph for the flipped dimer model.}
		\label{killedanddrift:kkstar}
		\end{center}
	\end{figure}
For $v \in \mathtt{B}$, define the dual Kasteleyn operator on the shifted graph by
\(
 	\tilde{K} f(v)= \sum_{u \sim v} K(v,u) f(u) 
\)
where $K(v,u)$ is the $(v,u)^{th}$ entry of the Kasteleyn matrix on the the shifted graph.  The {\it drifted operator} is given by
\begin{align}
	L_df(v)&=\tilde{K} K f(v) \nonumber \\
	&= s_1f(v+e_2) + s_2 f(v+e_1) + s_3 f(v-e_2)+s_4 f(v-e_1) - (s_1+s_2+s_3+s_4).  \label{thermo:driftedoperator}
\end{align}

\section{Scaling Window} \label{sec:scalingwindow}

This section focuses on the behavior of the height function from the flipped dimer model.  In doing so, we have to introduce the Green's function of the massive random walk as well as the Green's function of the  drifted random walk.    These functions allow the computation of the inverse Kasteleyn entries in the scaling window, which can be used to show that the fluctuations in the scaling window form a non-Gaussian field. 

\subsection{Green's Functions}

In this subsection, we compute the massive and drifted Green's functions in the scaling window.   These functions are closely related to the flipped and drifted dimer model -- the inverse Kasteleyn entries for the flipped (and respectively drifted) dimer model can be written as a linear combination of massive (drifted) Green's functions.

\subsubsection{Drifted Green's Function} 

The drifted operator, defined in (\ref{thermo:driftedoperator}), can be re-written as
\(
	L_d f(v)=s_1\partial_y f(v)+s_2 \partial_x f(v)-s_3 \partial_y f(v-e_2) -s_4 \partial_x f(v-e_1)
\)
where $\partial_x$ and $\partial_y$ represent the discrete derivatives.  Let $H^1_k$ be the Green's function of the $L_d^1$ on the plane, where the superscript represents the lattice spacing.  In other words, we have
\(
	-L_d^1 H^1_d=\delta_0. \label{scale:eqn:driftpde}
\)

There is a finite drift in the scaling window $(\e, \e)$ if $s_1 - s_3$ and $s_2- s_4$ are both $O(\e)$.  Choosing $s_1$ and $s_4$ equal to $1$, we have

\begin{lemma} \label{scale:lem:driftedgreens}
Suppose that $\lim_{\e \to 0} 1/\e (s_3-s_1)=\l_2>0$ and $\lim_{\e \to 0} 1/\e (s_4-s_2)=\l_1>0$.  In the scaling window $(\e, \e)$,  for $(x,y) \in \R^2 \backslash \{(0,0)\}$
\(
	H^0_d(x,y)= \frac{1}{\pi} \exp \left(  \l_1 x-\l_2 y\right)	B_K\left( 0 , \sqrt{(\l_1^2 +\l_2^2)(x^2+y^2)} \right)
\)
where $B_K(n,z)$ is the modified Bessel function of the second kind with index $n$.
\end{lemma}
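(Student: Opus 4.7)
The plan is to compute $H^1_d$ through its Fourier representation and extract the scaling-window asymptotics. The defining equation $-L_d^1 H^1_d=\delta_0$ combined with translation invariance of $L_d^1$ on the $\mathbb{B}_0$ sub-lattice yields
\begin{equation*}
H^1_d(m,n) = -\frac{1}{(2\pi)^2}\int_{[-\pi,\pi]^2} \frac{e^{i(m\theta_1+n\theta_2)}}{\hat L_d(\theta_1,\theta_2)}\,d\theta_1\,d\theta_2,
\end{equation*}
with Fourier symbol $\hat L_d(\theta)=s_1e^{i\theta_2}+s_3e^{-i\theta_2}+s_2e^{i\theta_1}+s_4e^{-i\theta_1}-(s_1+s_2+s_3+s_4)$. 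First I would substitute $(m,n)=(x/\eps,y/\eps)$ and rescale $\theta=c\eps\phi$ for the appropriate $c$ dictated by the $\mathbb{B}_0$ step size. Using $s_1=s_4=1$, $s_2=1-\lambda_1\eps$, $s_3=1+\lambda_2\eps$, a Taylor expansion gives $\hat L_d(c\eps\phi) = -c^2\eps^2\bigl(\phi_1^2+\phi_2^2\bigr) - ic^2\eps^2(\lambda_1\phi_1+\lambda_2\phi_2) + o(\eps^2)$, where the quadratic part comes from $2\cos\theta_j-2\approx-\theta_j^2$ and the imaginary linear part from $(s_j-s_{j+2})\sin\theta_j \approx -\lambda_j\eps\theta_j$. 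The Jacobian factor $\eps^2$ cancels the $\eps^2$ in $\hat L_d$, so dominated convergence leaves a continuum integral representation of $H^0_d$.

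The resulting continuum Green's function solves $(-\Delta+\l_1\partial_x+\l_2\partial_y)H^0_d = c'\delta_0$ (signs absorbed into the orientation of the $y$-axis). I would evaluate it via the standard drift-removal substitution
\begin{equation*}
H^0_d(x,y) = e^{ax+by}\,G(x,y),
\end{equation*}
choosing $(a,b)$ proportional to $(\l_1,\l_2)$ so that the first-order drift terms in the equation for $G$ cancel. A direct computation then reduces $G$ to the Green's function of a massive Laplacian $(-\Delta+m^2)G=c''\delta_0$ with $m$ proportional to $\sqrt{\l_1^2+\l_2^2}$; passing to polar coordinates in the Fourier integral and using the Bochner--Riesz-type identity $\int_0^\infty pJ_0(pr)/(p^2+m^2)\,dp=K_0(mr)$ (or, equivalently, the heat-kernel representation $G=\int_0^\infty e^{-m^2t}p_t\,dt$ together with $\int_0^\infty t^{-1}e^{-a/t-bt}\,dt=2K_0(2\sqrt{ab})$) gives $G(x,y)=\frac{1}{2\pi}K_0(mr)$. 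Reinserting the exponential factor $e^{ax+by}$ and tracking the overall multiplicative constants reproduces the stated form.

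The main obstacle is the rigorous passage to the limit in the Fourier integral, because the continuum symbol $|\phi|^2+i(\l_1\phi_1+\l_2\phi_2)$ vanishes at the origin and one must justify both integrability near $\phi=0$ and the exchange of limit and integral. Integrability holds because the imaginary linear part forces the zero set off the real axis; away from the origin, the difference between $\hat L_d(c\eps\phi)$ and its leading-order approximation is uniformly controlled by a small power of $\eps$ times the modulus of the leading part, so dominated convergence applies on balls $|\phi|\le R$ and the tail contribution is $O(R^{-1})$. Tracking the exact prefactor $1/\pi$, the coefficient $\sqrt{\l_1^2+\l_2^2}$ in the Bessel argument, and the sign pattern $\l_1 x-\l_2 y$ in the exponent requires careful bookkeeping of the sub-lattice spacing (the $\mathbb{B}_0$ vertices are spaced by $2$ in the original $\Z^2$ coordinates, so the effective Jacobian contributes a further factor of $4$) and the orientation of $e_2$, and I expect these accounting choices to reproduce the exact formula in the statement.
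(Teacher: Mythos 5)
Your proof is correct in outline but reaches the continuum by a genuinely different route from the paper. The paper's proof never touches the Fourier symbol: it observes that $L_d^\e$ is the Markov generator of a random walk with drift $(\l_1\e,-\l_2\e)$, invokes Donsker's theorem to replace the walk by a Brownian motion with drift $\underline{\l}=(\l_1,-\l_2)$, and then uses transience to write $H_d^0(0,y)=\int_0^\infty \frac{1}{2\pi t}\exp\left(-\frac{|y-\underline{\l}t|^2}{2t}\right)dt$, which is evaluated directly as the stated Bessel function. You instead expand the lattice symbol $\hat L_d$, pass to the limit in the Fourier integral by dominated convergence, and solve the resulting continuum equation by drift removal to a massive Laplacian; your closing identity $\int_0^\infty t^{-1}e^{-a/t-bt}\,dt=2K_0(2\sqrt{ab})$ is in fact the very integral the paper computes, so the two arguments merge at the end. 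What your route buys is a more direct justification of the limit interchange: weak convergence of paths \`a la Donsker does not by itself give convergence of Green's functions, whereas the symbol expansion plus dominated convergence addresses exactly that. Two caveats. First, your tail estimate is too quick: $\int_{|\phi|>R}|\phi|^{-2}\,d\phi$ diverges logarithmically in two dimensions, so the large-$\phi$ contribution is only conditionally controlled and needs the oscillatory factor $e^{i\phi\cdot z}$ (integration by parts for $z\neq 0$), not a crude $O(R^{-1})$ bound. Second, the constant bookkeeping you defer is precisely where the content of the formula lives: a naive reading of your symbol $|\phi|^2+i(\l_1\phi_1+\l_2\phi_2)$ corresponds to $-\Delta+\underline{\l}\cdot\nabla$ and after drift removal yields mass $|\underline{\l}|/2$ and exponent $\underline{\l}\cdot z/2$, which differs by a factor of $2$ from the stated $B_K(0,|\underline{\l}||z|)$ and $e^{\l_1x-\l_2y}$; reconciling this with the heat-kernel normalization $\tfrac12\Delta$ and the sub-lattice step size $2$ is not optional fine print but the step that fixes the Bessel argument, the prefactor $1/\pi$, and the sign pattern in the exponent, so it would need to be carried out explicitly for the proof to be complete.
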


\begin{proof}
For the above choice of weights, the operator $L^\e_d$ is the Markov generator of a two-dimensional random walk with drift $(\l_1 \e,-\l_2 \e)$  because
\(
	L_d^\e f(v) = \Delta^\e f(v) + \l_2 \e (f(v-e_1)-f(v)) +\l_1 \e (f(v-e_2)-f(v)) 
\)
where $\Delta^\e$ is the discrete Laplacian operator on $\e \Z^2$.  By Donsker's theorem, this random walk with drift converges (weakly) to a Brownian motion with drift $\underline{\l}:=(\l_1,-\l_2)$ and the partial differential equation now reads
\(
	(-\Delta+\underline{\l} \cdot \nabla ) H_d^0=\delta_0. 
\)
As a two-dimensional Brownian motion with drift is transient, the Greens' function  can be computed using the usual probabilistic methods, namely, for $y\in \R^2 \backslash \{0\}$
\begin{align}
	H_d^0(0,y) &= \int_0^\infty p(0,y,t) dt\\
	 		&= \int_0^\infty \frac{1}{2\pi t} \exp\left( -\frac{|y-\underline{\l} t |^2}{2t} \right) dt \label{thermo:eqn:driftgreens} 
\end{align}
where $p(0,y,t)$ is the transition kernel for the Brownian motion with drift.  The integral in (\ref{thermo:eqn:driftgreens}) is a standard integral and is exactly the Bessel Function given in the lemma.

\end{proof}

The function $H_d^0$ is the Green's function for a two dimensional Brownian motion with drift on the plane.  This is the reason behind the name - `the drifted dimer model'.

\subsubsection{Green's function of the Massive Random Walk} \label{sec:subsubkilledgreens}

The operator defined in (\ref{thermo:killedoperator}) is the (Markov) generator of a massive random walk.  The random walk has transition probabilities $(r_1 r_3, r_2 r_4, r_1 r_3, r_2 r_4)/(r_1^2 +r_2^2 +r_3^2 +r_4^2)$ for the north, east, south and west steps and probability $((r_1-r_3)^2+(r_2-r_4)^2)/(r_1^2 +r_2^2 +r_3^2 +r_4^2)$ of visiting the so-called {\it graveyard}.

The massive operator (\ref{thermo:killedoperator}) can be rewritten as 
\(
	L_f f = \left( r_2 r_4\partial_y^2 + r_1 r_3 \partial_x^2 -(r_1-r_3)^2-(r_2-r_4)^2 \right) f
\)
where $\partial_x$ and $\partial_y$ represent the discrete derivatives.  Let $H^1_f$ be the Green's function of $L_f^1$ on the plane, where the superscripts represents the lattice spacing.  In other words, 
\(
	-L_f^1 H^1_f = \delta_0.\label{scalep:eqn:dispde1}
\)
Note that we can also write 
\(
	H^1_f(x,y)=\frac{1}{(2\pi i)^2}  \int_{|w|=1} \int_{|z|=1} \frac{ w^x z^y}{P_f(z,w)} \frac{dz}{z}\frac{dw}{w}  \label{scalep:eqn:dispde2}
\)
i.e. $H^1_f$ is the Fourier Coefficients of $1/P_f(z,w)$.  This can be seen by evaluating the appropriate linear combination of $L_f$.  

In order for the massive random walk to have a  finite rate of visiting the graveyard  in the scaling window $(\e,\e)$, we require $r_1 - r_3$ and $r_2 -r_4$ to be $c \e$ where $c$ is some constant.  Choosing $r_1$ and $r_4$ equal to 1, we have

\begin{lemma} \label{scale:lem:killedgreens}
Suppose that $\lim_{\e \to 0} 1/\e (r_3-r_1)=\l_1>0$ and $\lim_{\e \to 0} 1/\e (r_4-r_2)=\l_2>0$.  In the scaling window $(\e, \e)$,  for $(x,y) \in \R^2 \backslash \{(0,0)\}$
\(
	H^0_f(x,y)= \frac{1}{\pi} B_K\left( 0 , \sqrt{(\l_1^2 +\l_2^2)(x^2+y^2)} \right)
\)
where $B_K(n,z)$ is the modified Bessel function of the second kind with index $n$.
\end{lemma}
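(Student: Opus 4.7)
The proof will follow the same blueprint as Lemma~\ref{scale:lem:driftedgreens}: identify $L_f^\e$ as the generator of a massive random walk on $\e \Z^2$, pass to a scaling limit by Donsker's theorem, and evaluate the resulting Green's function integral using the standard representation of the modified Bessel function $K_0$.

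First I would substitute $r_1 = r_4 = 1$, $r_3 = 1 + \l_1 \e$ and $r_2 = 1 - \l_2 \e$ into (\ref{thermo:killedoperator}) and collect terms by order in $\e$, obtaining
\[
L_f^\e f(v) = \bigl[\partial_x^{(2)} + \partial_y^{(2)}\bigr] f(v) + \e \bigl[\l_1 \partial_y^{(2)} - \l_2 \partial_x^{(2)}\bigr] f(v) - (\l_1^2 + \l_2^2)\e^2\, f(v),
\]
where $\partial_x^{(2)}, \partial_y^{(2)}$ denote the discrete second differences. Under the $(\e,\e)$ rescaling each $\partial_i^{(2)}$ contributes an additional factor of $\e^2$ when applied to smooth test functions, so the middle bracket is actually $O(\e^3)$ and drops out to leading order; only the discrete Laplacian and the mass term $(\l_1^2+\l_2^2)\e^2$ survive. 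After normalizing by $r_1^2+r_2^2+r_3^2+r_4^2 \to 4$, the operator is the Markov generator of a random walk on $\e\Z^2$ whose killing probability per step is $((\l_1^2+\l_2^2)/4)\e^2 + o(\e^2)$.

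Donsker's theorem applied to this killed walk then gives convergence to a standard two-dimensional Brownian motion killed at rate $m^2 := \l_1^2 + \l_2^2$, so that the discrete PDE (\ref{scalep:eqn:dispde1}) passes in the limit to $(-\Delta + m^2) H_f^0 = \delta_0$ on $\R^2$. The limiting Green's function then admits the probabilistic representation
\[
H_f^0(0,y) = \int_0^\infty \frac{1}{2\pi t} \exp\!\left(-\frac{|y|^2}{2t} - m^2 t\right) dt,
\]
which I would evaluate using $K_0(z) = \tfrac{1}{2}\int_0^\infty u^{-1} e^{-u - z^2/(4u)}\, du$ together with the substitution $u = m^2 t$ (exactly analogous to the final step of the proof of Lemma~\ref{scale:lem:driftedgreens}) to arrive at $H_f^0(0,y) = \tfrac{1}{\pi} K_0(m |y|)$, as claimed.

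The delicate point will be the asymptotic analysis of the first step: one must verify that the $O(\e)$ corrections multiplying the discrete second differences are genuinely negligible while the $O(\e^2)$ killing term survives at precisely the right order to provide the mass in the limiting Helmholtz equation. More rigorously, promoting the local convergence of generators to a pointwise statement about Green's functions requires uniform control of the discrete Green's function away from the origin; this can alternatively be obtained by a Laplace-type asymptotic analysis of the Fourier representation (\ref{scalep:eqn:dispde2}) near its singularity at $(z,w) = (1,1)$, and the scaling window $(\e,\e)$ is precisely the one for which this singularity contributes a non-trivial continuum limit.
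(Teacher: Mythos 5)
Your proposal is correct in outline but takes a genuinely different route from the paper's proof, which is a two-line discrete gauge transformation: the paper sets $G_1(m,n)=H^1_d(m,n)\,r_2^{m}r_3^{-n}$, checks that this conjugation carries the drifted operator (\ref{thermo:driftedoperator}) into the massive operator (\ref{thermo:killedoperator}) --- this is the relation $s_2=r_2^2/(r_1r_4)$, $s_3=r_3^2/(r_1r_4)$ of Theorem \ref{intro:thm:equivmeas} at the level of operators --- concludes $G_1=H^1_f$, and then reads off the scaling-window limit from Lemma \ref{scale:lem:driftedgreens}: the prefactor $r_2^{m}r_3^{-n}$ converges exactly to the reciprocal of the drift factor $\exp(\l_1x-\l_2y)$, leaving the bare Bessel function. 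That route inherits every normalization constant verbatim from Lemma \ref{scale:lem:driftedgreens}. Your route instead redoes the continuum limit from scratch for the killed walk; your observation that the $O(\e)$ corrections to the discrete Laplacian enter one order too low to survive while the $O(\e^2)$ mass term survives at exactly the order of the Laplacian is the correct heart of the matter, and your suggestion to control the limit through the singularity of the Fourier representation (\ref{scalep:eqn:dispde2}) at $(z,w)=(1,1)$ is a sound, arguably cleaner, way to make the generator-to-Green's-function passage rigorous.

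There is, however, one concrete slip: your final integral does not evaluate to the claimed expression. Against the heat kernel $\frac{1}{2\pi t}e^{-|y|^2/(2t)}$ (generator $\tfrac12\Delta$), consistency with the paper's own computation requires the continuum killing rate to be $\tfrac12(\l_1^2+\l_2^2)$, not $\l_1^2+\l_2^2$: completing the square in (\ref{thermo:eqn:driftgreens}) produces the factor $e^{-|\underline{\l}|^2t/2}$, and the killed walk must be clocked against the same diffusive normalization. As written, $\int_0^\infty \frac{1}{2\pi t}\exp(-|y|^2/(2t)-m^2t)\,dt=\frac{1}{\pi}B_K(0,\sqrt{2}\,m|y|)$, which is off by $\sqrt{2}$ in the argument of the Bessel function. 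This is precisely the bookkeeping (is the limiting generator $\Delta$ or $\tfrac12\Delta$; is the walk on $\Z^2$ or on the sublattice spanned by $e_1=(2,0)$, $e_2=(0,2)$) that the paper's gauge-transformation argument sidesteps entirely. You should either justify the rate $m^2/2$ by counting steps of the discrete walk per unit of continuum time, or simply adopt the reduction to the drifted case.
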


\begin{proof}
Set $G_1(m,n)=H^1_d(m,n) r_2^m r_3^{-n}$ for $(m,n)\in \Z^2$.  $G_1$ saftisfies the discrete partial differential equation (\ref{scalep:eqn:dispde1}) and so $G_1 = H^1_f$.  $H_d^0$ is computed in Lemma \ref{scale:lem:driftedgreens} and evaluating the exponential factors gives $H^0_f$.

\end{proof}

If $r_1$ and $r_4$ do not approach $1$ in the scaling window but $r_1 -r_3$ and $r_2-r_3$ are $O(\e)$, then the Green's function in the scaling window does exist.  It is given by a stretch or contraction in the appropriate direction of the Green's function given in Lemma \ref{scale:lem:killedgreens}.

\subsection{Non-Gaussian Scaling Window} \label{subsec:nongauss}
In this subsection, we show that the height function for the flipped dimer model with specific edge weights converges to a non-Gaussian field, i.e. we prove Theorem \ref{intro:thm:non-gauss}.   In order to prove this assertion, we need to determine the correlation functions for the flipped dimer model in the scaling window.  Theorem \ref{intro:thm:non-gauss} follows by showing that the fourth moment of the height function cannot be written as the sum of a product of the second moments of the height function and hence violates Wick's theorem.  

The next lemma gives the explicit expressions for the correlation function of the dimer model.

\begin{lemma}\label{scale:lem:invkastentries}
Let $|\underline{\l}| = \sqrt{\l_1^2 + \l_2^2}$, $z=(x,y)$ and $|z|=\sqrt{x^2+y^2}$, with $\lim_{\e \to 0} 1/\e(b_i-w_j)=(x,y)$, where $b_i \in \mathtt{B}_i$ and $w_j \in \mathtt{W}_j$ for $i,j \in \{0,1\}$.  In the scaling window $(\e,\e)$, the entries of the inverse Kasteleyn matrix are given by
\(
	\lim_{\e \to 0} \frac{1}{\e} K^{-1} (b_0, w_0)= \frac{|\underline{\l}|}{\pi|z|} x B_K\left(1, |\underline{\l}| |z| \right) + \frac{1}{\pi} \l_1 B_K \left( 0,  |\underline{\l}| |z| \right),
\)
\(
	\lim_{\e \to 0} \frac{1}{\e} K^{-1} (b_0, w_1)=- \frac{|\underline{\l}|}{\pi |z|} y B_K\left(1,  |\underline{\l}| |z| \right) +\frac{1}{\pi} \l_2 B_K \left( 0,  |\underline{\l}| |z| \right)
\)
\(
	\lim_{\e \to 0} \frac{1}{\e} K^{-1} (b_1, w_0)= \frac{|\underline{\l}|}{\pi |z|} y B_K\left(1,  |\underline{\l}| |z| \right) + \frac{1}{\pi}\l_2 B_K \left( 0,  |\underline{\l}| |z| \right)
\)
and
\(
	\lim_{\e \to 0} \frac{1}{\e} K^{-1} (b_1, w_1)= \frac{|\underline{\l}|}{\pi  |z|} x B_K\left(1,  |\underline{\l}| |z| \right) -\frac{1}{\pi}\l_1 B_K \left( 0,  |\underline{\l}| |z| \right).
\)
where $B_K(n,r)$ is the modified Bessel functions of the second kind with index $n$. 
\end{lemma}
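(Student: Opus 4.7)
The plan is to reduce each entry $K_f^{-1}(b_i, w_j + (X,Y))$ to a short linear combination of shifts of the massive Green's function $H_f^1$, and then pass to the scaling limit using Lemma \ref{scale:lem:killedgreens} together with a Taylor expansion in the lattice step.

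Applying the $2\times 2$ cofactor formula to the Fourier Kasteleyn matrix $\tilde K_f(z,w)$ displayed above Lemma \ref{scale:lem:invkastentries}, each entry of $\tilde K_f^{-1}(z,w)$ equals $p_{ij}(z,w)/P_f(z,w)$, where the numerator $p_{ij}$ is a two-term Laurent polynomial with coefficients $\pm r_k$. Using the Fourier representation (\ref{scalep:eqn:dispde2}) of $H_f^1$, this gives an identity
\[
    K_f^{-1}(b_i, w_j + (X,Y)) = \alpha_{ij}\, H_f^1(X,Y) + \beta_{ij}\, H_f^1(X + a_{ij}, Y + b_{ij}),
\]
where $|a_{ij}| + |b_{ij}| = 1$ and the coefficients $\alpha_{ij}, \beta_{ij}$ are linear in $r_1,\ldots,r_4$. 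The symmetry $H_f^1(X,Y) = H_f^1(-X,-Y)$, inherited from $P_f(z,w) = P_f(z^{-1},w^{-1})$, is used to bring the shifts into this canonical form.

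In the scaling window with $r_1 = r_4 = 1$, $r_2 = 1 - \l_2 \eps$, and $r_3 = 1 + \l_1 \eps$, the coefficients at $\eps = 0$ cancel, $\alpha_{ij} + \beta_{ij} = 0$, so the linear combination is $O(\eps)$ with two contributions: a mass term $\l_k \eps\, H_f^1(X,Y)$ coming from the $O(\eps)$ perturbation of one coefficient, and a discrete-derivative term $H_f^1(X+a_{ij}, Y+b_{ij}) - H_f^1(X,Y)$. At macroscopic position $(X,Y) = (x/\eps, y/\eps)$, Lemma \ref{scale:lem:killedgreens} gives $H_f^1 \to H_f^0$ pointwise away from the origin, while the discrete derivative rescales to $\eps\,\partial_\bullet H_f^0(x,y)$. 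Dividing by $\eps$ and taking $\eps \to 0$ produces
\[
    \lim_{\eps \to 0}\frac{1}{\eps} K_f^{-1}(b_i, w_j + (x/\eps, y/\eps)) = \l_k\, H_f^0(x,y) \pm \partial_\bullet H_f^0(x,y),
\]
where the sign, the choice of $\partial_x$ versus $\partial_y$, and the index $k \in \{1,2\}$ are determined by the specific cofactor entry.

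Finally, the partial derivatives of $H_f^0(x,y) = \tfrac{1}{\pi} B_K\bigl(0,|\underline{\l}|\,|z|\bigr)$ are computed using the Bessel identity $\tfrac{d}{du}B_K(0,u) = -B_K(1,u)$ and the chain rule, yielding $\partial_x H_f^0 = -\tfrac{|\underline{\l}|\,x}{\pi |z|} B_K(1,|\underline{\l}|\,|z|)$ and analogously for $\partial_y$; substituting gives the four stated formulas. The main obstacle is the sign and index bookkeeping: propagating the Kasteleyn orientation of Figure \ref{sqgrid:kastorient}, the signs from the $2\times 2$ adjugate, and the precise identification of $\l_1,\l_2$ with specific edge weights via Lemma \ref{scale:lem:killedgreens} must all be tracked consistently across the four cases $(i,j) \in \{0,1\}^2$. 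The analytic content is elementary; the care lies entirely in producing the correct signs and in matching $x$ versus $y$ in each formula.
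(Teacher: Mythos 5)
Your proposal is correct and follows essentially the same route as the paper: both express each inverse Kasteleyn entry as a two-term linear combination of (shifted) massive Green's functions via the cofactor structure of $\tilde K_f$, then use Lemma \ref{scale:lem:killedgreens} and a Taylor expansion to identify the limit as $\l_k H_f^0$ plus a partial derivative of $H_f^0$, evaluated with the Bessel identity $\frac{d}{du}B_K(0,u)=-B_K(1,u)$. Your explicit observation that the $O(1)$ coefficients cancel at $\e=0$ (so the combination is genuinely $O(\e)$) is a useful point the paper leaves implicit, but it is not a different method.
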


\begin{proof} 
The entries of the inverse Kasteleyn matrix are given by the local statistics formula (\ref{setup:localstats}) where 
\(
	\tilde{K} (z,w)= \left( \begin{array}{cc} 
	r_2 -w & r_3 - z \\ 
	r_3-z^{-1} & -r_2 +w^{-1}  \end{array} \right).
\)
Each entry of the inverse Kasteleyn matrix can be written as a linear combination of the massive Green's function (in the discrete setting).  For example, with lattice spacing $\e$, $K^{-1} (b_0, w_0)$ is given by
\(
	-r_2 H^\e_k(x,y) + H^\e_d(x-\e,y).
\)	
Using a Taylor expansion and Lemma \ref{scale:lem:killedgreens}, this can be rewritten in terms of Bessel functions.   For example,  $K^{-1} (b_0, w_0)$ now reads
\(
	\e \left( -\frac{\partial}{\partial x} H^0_k(x,y) +\l_1 H^0_k(x,y) \right) +O(\e^2).
\)
Dividing through by $\e$ and taking limits gives the results.

\end{proof}

Let $S_n (v_1,\dots,v_n)$ denote the $n$ point correlation function for the height change along the edges $v_1,\dots,v_n$ in the scaling window $(\e, \e)$.  Suppose that $\g_1,\dots, \g_n$ are $n$ disjoint lines in the scaling window $(\e,\e)$, we let
 \(
 	S_n(\g_1,\dots,\g_n)= \int_{\g_1} \dots \int_{\g_n} S_n(v_1,\dots, v_n) dv_1 \dots dv_n.
\)

Let $\gamma_1, \dots , \gamma_4$ be four lines (in the scaling window) on the same vertical line with $\g_i \cap \g_j = \emptyset$.  Any two point correlation function can be determined for the lines $\g_1, \dots , \g_4$

\begin{lemma} \label{scale:lem:twopoint}
In the scaling window $(\e,\e)$, the two point correlation function for the height function along  $\g_i$ and $\g_j$ for $i \not= j$ and $i,j \in \{1,2,3,4\}$ is given by
\(
	S_2 (\g_i,\g_j)=\frac{18}{\pi^2} |\underline{\l}|^2\int_{\g_i} \int_{\g_j}  B_K(0,|\underline{\l}| |v_i-v_j|)^2+  B_K(1,|\underline{\l}||v_1-v_2|)^2 dv_i dv_j. 
\)
\end{lemma}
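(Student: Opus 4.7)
The plan is to express the two-point height correlation as a double sum of dimer--dimer covariances at the horizontal edges intersected by $\gamma_i$ and $\gamma_j$, apply the local statistics formula, and then substitute the scaling-window asymptotics of Lemma~\ref{scale:lem:invkastentries}.

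By the height-function convention of Section~\ref{sec:intro:briefsetup}, with $d=4$ the height change across a single edge $e$ equals $4 N_e - 1$ up to an orientation sign, where $N_e$ is the indicator that $e$ is covered by a dimer. Hence the centered height change along a vertical segment $\gamma$ is a signed sum of $4(N_{e_k} - \E N_{e_k})$ over the horizontal edges $e_k$ crossed by $\gamma$, and $S_2(\gamma_i,\gamma_j)$ reduces (up to an overall factor of $16$) to a double sum of $\mathrm{Cov}(N_{e_1}, N_{e_2})$ over pairs $(e_1, e_2) \in \gamma_i \times \gamma_j$. The local statistics formula (\ref{setup:localstats}) with $m=2$ then gives the standard Wick identity
\begin{equation*}
\mathrm{Cov}(N_{e_1}, N_{e_2}) = -K(w_1, b_1)\, K(w_2, b_2)\, K^{-1}(b_1, w_2)\, K^{-1}(b_2, w_1).
\end{equation*}

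Next, because a vertical line at half-integer $x$-coordinate crosses horizontal edges alternating between $(b_0, w_0)$- and $(b_1, w_1)$-types, I will split the double sum into the four corresponding edge-type cases. In the scaling window the pre-factors $K(w_i, b_i)$ tend to $\pm 1$ with signs determined by the Kasteleyn orientation of Figure~\ref{sqgrid:kastorient}, and the macroscopic separation between the two crossed edges becomes purely vertical, so $x = 0$ in Lemma~\ref{scale:lem:invkastentries}. To leading order in $\e$, the diagonal entries $K^{-1}(b_0, w_0)$ and $K^{-1}(b_1, w_1)$ reduce to multiples of $\lambda_1 B_K(0, |\underline\lambda| |v|)$, while the off-diagonal entries $K^{-1}(b_0, w_1)$ and $K^{-1}(b_1, w_0)$ each contain a $\lambda_2 B_K(0, \cdot)$ piece and a $|\underline\lambda| \operatorname{sgn}(v) B_K(1, \cdot)$ piece, the two $B_K(1, \cdot)$ pieces carrying opposite signs. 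I will then convert the microscopic double sum to a double integral over $\gamma_i \times \gamma_j$ using that the density of each edge type along a vertical line is $(2\e)^{-1}$ per unit macroscopic length, so that the $\e^{-2}$ from the density factors exactly cancels the $\e^2$ produced by the two $K^{-1}$ entries. Assembling the four edge-type cases with the signs read off from the Kasteleyn orientation, the $B_K(0)\,B_K(1)$ cross terms in the mixed cases cancel once both orderings of the mixed pair are combined, and only $B_K(0)^2$ and $B_K(1)^2$ contributions survive. The $B_K(0)^2$ coefficient aggregates $\lambda_1^2$ contributions from the same-type pairs with $\lambda_2^2$ contributions from the mixed pairs, reassembling into $|\underline\lambda|^2$; the $B_K(1)^2$ coefficient comes entirely from the mixed pairs as $|\underline\lambda|^2$. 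Tracking the overall numerical factors (the $16$ from the height variance, the $\pi^{-2}$ from Lemma~\ref{scale:lem:invkastentries}, and the density factors) yields the stated prefactor $18/\pi^2$.

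The main obstacle is the sign bookkeeping. The Kasteleyn orientation and the alternating-parity traversal convention for the height function contribute nontrivial signs in each of the four edge-type cases; verifying that the $B_K(0) \cdot B_K(1)$ cross terms cancel across orderings, that the $\lambda_1^2$ and $\lambda_2^2$ pieces combine into the isotropic $|\underline\lambda|^2$ rather than breaking the $\lambda_1 \leftrightarrow \lambda_2$ symmetry, and that all numerical constants conspire to the precise prefactor $18/\pi^2$ is the substantive content of the calculation; everything else is the routine Wick expansion for dimer correlations and a Taylor expansion of the asymptotics of Lemma~\ref{scale:lem:invkastentries}.
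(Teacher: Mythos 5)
Your strategy is the same as the paper's: write the height change along each $\g_i$ as an alternating sum of centered dimer indicators over the crossed horizontal edges, reduce the double sum to dimer--dimer covariances via the local statistics formula (\ref{setup:localstats}) with $m=2$, substitute the scaling-window asymptotics of Lemma \ref{scale:lem:invkastentries}, and pass from the Riemann sum to the double integral. The structure is right, and your observation that only the $B_K(0,\cdot)^2$ and $B_K(1,\cdot)^2$ terms survive after combining the edge-type cases is consistent with what the paper's determinant computation produces, namely a contribution of the form $-2\e^2\frac{|\underline{\l}|^2}{\pi^2}\bigl(B_K(0,\cdot)^2+B_K(1,\cdot)^2\bigr)$ per pair of indices.

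The genuine gap is the prefactor, which you assert rather than derive, and whose derivation cannot work as you have set it up. The paper writes the height increment over a consecutive pair of crossed edges as $3(X_{2k}-X_{2k+1})$ and obtains $18/\pi^2 = 3^2\cdot 2/\pi^2$ from the $3^2$ together with the factor $2$ coming out of the sum of the four $2\times 2$ determinants. You instead take the increment per edge to be $4N_e-1$, so your pair increment is $4(X_{2k}-X_{2k+1})$ and your overall factor is $16$; under the identical bookkeeping this yields $32/\pi^2$, not $18/\pi^2$. Saying that the $16$, the $\pi^{-2}$, and the density factors ``conspire'' to give $18/\pi^2$ is therefore not a computation but a hope, and with your normalization it is false: $16$ and $9$ cannot both be reconciled with the same density convention. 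You need to fix a single normalization of the height function, carry it through explicitly, and actually exhibit the constant; likewise the claimed cancellation of the $B_K(0)B_K(1)$ cross terms and the reassembly of $\l_1^2+\l_2^2$ into $|\underline{\l}|^2$ are exactly the ``substantive content'' you defer, and the lemma is not proved until they are checked against the signs in Lemma \ref{scale:lem:invkastentries} and the Kasteleyn orientation of Figure \ref{sqgrid:kastorient}.
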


\begin{proof}
The definition of the height function gives

\(	\label{scale:eqn:twopointdef}
	S_2(\g_i, \g_j)=\lim_{\e\to0} 3^2\E \left[ \sum_{k=0}^{2n} \sum_{l=0}^{2m} (X_{2k}-X_{2k+1}) (X_{2l} -X_{2l+1}) \right]
\)
where the index $k$ and $l$ are the discretized index of the lines $\g_i$ and $\g_j$, and $X_k$ is the indicator event that there is a horizontal dimer crossing $\g_i$ at index $k$.   Interchanging the sums,  we are required to compute 
\(
	\E \left[(X_{2i}-X_{2i+1}) (X_{2j} -X_{2j+1}) \right].
\)
This quantity is a linear combination of four 2 by 2 determinants, i.e.
\(
	\sum_{i_1,j_1=0}^1 (-1)^{i_1+j_1}\det \left( \begin{array}{cc}
	0 & K^{-1}(w_{i_1}, b_{j_1}) \\
	K^{-1}(w_{j_1}, b_{i_1}) & 0 \end{array} \right) 
\)
where $\lim_{\e \to 0} 1/\e|w_{m}- b_{n}|=|m-n|$.  These entries can approximate by Lemma \ref{scale:lem:invkastentries}.  This gives
\(
	-2\e^2 \frac{ |\underline{\l}|^2}{\pi^2} \left( B_K(0,|\underline{\l}|^2|v_j-v_i|)^2+ B_K(1,|\underline{\l}|^2|v_j-v_i|)^2 \right) \label{scale:lempro:integrand}
\)
where $v_1=k+O(\e)$ and $v_2=l+O(\e)$.  Plugging (\ref{scale:lempro:integrand}) back into (\ref{scale:eqn:twopointdef}) and taking the limit as $\e \to 0$, the Riemann sum converges to the required integral.

\end{proof}

In a similar fashion, we can find an exact expression for the four point correlation function for the change of heights along $\g_1, \dots, \g_4$.  However, this expression is rather complicated, and the full expression is not required for the proof of  Theorem \ref{intro:thm:non-gauss}.  Instead, we just require the following

\begin{lemma} \label{scale:lem:fourpoint}
In the scaling window $(\e,\e)$, the four point correlation function for the height function along $\g_1,\dots,\g_4$ is given by
\(
	S_4(\g_1, \dots, \g_4)-\sum_{i,j=1}^4 \prod_{i \not = j} S_2(\g_i,\g_j)  = \int_{\g_1} \dots \int_{\g_4} f_{(\l_1,\l_2)}(x_1,x_2,x_3,x_4) dx_4 \dots dx_1. \label{scale:fourpoint}
\)
where $f_{(\l_1,\l_2)}(x_1,x_2,x_3,x_4)$ is defined in the Appendix equation (\ref{appendix:mess}).  When $(\l_1,\l_2)>0$, we have $f_{(\l_1,\l_2)}(x_1,x_2,x_3,x_4) \not =0$.  When $(\l_1,\l_2) \to 0$, we have $f_{(\l_1,\l_2)}(x_1,x_2,x_3,x_4) \to 0$.
\end{lemma}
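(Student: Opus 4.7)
The plan is to proceed in parallel with the proof of Lemma \ref{scale:lem:twopoint}, starting from the four-point analogue of the height-function definition. Writing $X^{(r)}_{k_r}$ for the indicator that a horizontal dimer crosses $\gamma_r$ at discrete index $k_r$, the discretised four-point function reads
\[
S_4(\gamma_1,\dots,\gamma_4) = \lim_{\e\to 0} 3^4 \sum_{k_1,\dots,k_4} \E\!\left[ \prod_{r=1}^{4} \bigl(X^{(r)}_{2k_r}-X^{(r)}_{2k_r+1}\bigr) \right].
\]
Applying the local statistics formula (\ref{setup:localstats}) to each summand expresses the expectation as a signed combination of $4\times 4$ determinants whose entries are inverse Kasteleyn values between the appropriate white/black vertices. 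This is the starting point.

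Next, expand each $4\times 4$ determinant according to its $24$ permutations and sort them by cycle type. The $3$ terms coming from fixed-point-free involutions (the "pair-partition" terms) factor as products of two $2\times 2$ minors; by the same Riemann-sum argument used in Lemma \ref{scale:lem:twopoint} these reassemble, after the limit $\e\to 0$, into exactly the Wick sum $\sum_{i\ne j}\prod_{i\ne j} S_2(\gamma_i,\gamma_j)$. Subtracting this Wick sum from the full determinant expansion isolates the contributions of the $4$-cycles and the associated transposition-with-two-fixed-points configurations; these are the residual terms. Using Lemma \ref{scale:lem:invkastentries} to substitute the scaling-limit forms of $K^{-1}$ (weighted combinations of $B_K(0,|\underline{\l}||z|)$ and $|\underline{\l}||z|^{-1} x\,B_K(1,|\underline{\l}||z|)$, etc.) and factoring out a global $\e^4$ to convert the discrete sums to Riemann integrals yields the integrand $f_{(\l_1,\l_2)}(x_1,\dots,x_4)$ quoted in the appendix.

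Two qualitative claims remain. For the limit $\underline{\l}\to 0$, observe that every entry in Lemma \ref{scale:lem:invkastentries} collapses to the inverse Kasteleyn entry of the critical dimer model, for which the height fluctuations are those of the Gaussian Free Field \cite{Ken:00, Ken:01}; the determinantal four-point function of the critical model must therefore coincide termwise with its Wick sum, so the residual $f_{(\l_1,\l_2)}$ vanishes identically in this limit. To verify that $f_{(\l_1,\l_2)}\not\equiv 0$ for $\l_1,\l_2>0$, evaluate the integrand at a symmetric widely-separated configuration and use the asymptotics $B_K(n,r)\sim \sqrt{\pi/(2r)}\,e^{-r}$ as $r\to\infty$: the Wick sum decays exponentially like the product of three Bessel factors over the three pair distances, whereas at least one $4$-cycle term decays only like the product of Bessel factors over a single Hamiltonian cycle through the four points. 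For generic spacings these decay rates differ, so the two contributions cannot cancel.

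The main obstacle I expect is the combinatorial bookkeeping. The Kasteleyn matrix entries depend sensitively on the sublattice memberships $\mathtt{B}_0,\mathtt{B}_1,\mathtt{W}_0,\mathtt{W}_1$ of the endpoints, and the Kasteleyn orientation introduces signs that must be tracked through both the $16$-fold indicator expansion and the permutation expansion of each $4\times 4$ determinant. The explicit closed form of $f_{(\l_1,\l_2)}$ is therefore intricate and is best relegated to the appendix; the essential content of the lemma is the structural statement that what remains after subtracting the Wick sum is non-zero precisely when the massive perturbation is on, which is what forces the non-Gaussian conclusion of Theorem \ref{intro:thm:non-gauss}.
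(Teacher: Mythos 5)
Your derivation of the identity itself --- local statistics formula, expansion of the zero-diagonal $4\times4$ determinants by cycle type, identification of the double-transposition terms with the Wick sum and of the residual with the $4$-cycle contributions, then substitution of Lemma \ref{scale:lem:invkastentries} and a Riemann-sum limit --- is exactly the paper's argument. (One small slip: permutations with fixed points contribute nothing because the diagonal vanishes, so there are no ``transposition-with-two-fixed-points'' residual terms; the residual is the $4$-cycles only.) The genuine gaps are in the two qualitative claims. For $(\l_1,\l_2)\to 0$ you argue that the entries collapse to the critical kernel and that, since the critical height field is the GFF, the critical determinant ``must coincide termwise with its Wick sum.'' That inference is invalid: Gaussianity of the limiting field controls integrated moments, not the pointwise value of the $4$-cycle residual of the limiting kernel, and the termwise cancellation you assert is precisely the nontrivial fact to be verified. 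The paper instead expands each Bessel factor around $|\l|=0$ (using $B_K(1,z)\approx 1/z+\dots$ and $B_K(0,z)\approx -\log z+\dots$), checks that the order-one contributions cancel among the terms of (\ref{appendix:mess}), concludes the integrand is $O(|\l|^2\log|\l|)$, and finishes with dominated convergence.

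The non-vanishing claim has a second gap. What must be shown is that the $4$-cycle terms do not cancel among themselves; comparing their decay rate with that of the Wick sum is beside the point, since $f_{(\l_1,\l_2)}$ is already the difference. Worse, the large-separation asymptotics you propose fail at leading order: for collinear points two of the three Hamiltonian cycles share the minimal perimeter $2|x_4-x_1|$, and since $B_K(0,r)$ and $B_K(1,r)$ have the same leading asymptotics $\sqrt{\pi/(2r)}\,e^{-r}$, the eight terms of (\ref{appendix:mess}) carrying the distance multiset $\{y_1,y_2,y_3,y_1+y_2+y_3\}$ come with four plus and four minus signs and cancel to leading exponential order. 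The paper's Lemma \ref{appendixlemma} avoids this by choosing explicit nearly-degenerate intervals, splitting $f$ into positive and negative parts, and producing quantitative bounds whose difference is strictly positive; an argument of this finer type (or at least the next-order terms distinguishing $B_K(0,\cdot)$ from $B_K(1,\cdot)$) is needed to close your proof.
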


\begin{proof}
	From the definition of the height function, we can evaluate $S_4(\g_1, \g_2,\g_3,\g_4)$, using the four point correlation function of the dimer model, namely,
\(	
	\lim_{\e\to0} 3^4\E \left[ \sum_{i=0}^{2n} \sum_{j=0}^{2m} \sum_{k=0}^{2p} \sum_{l=0}^{2q} (X_{2i}-X_{2i+1}) (X_{2j} -X_{2j+1}) (X_{2k}-X_{2k+1})(X_{2l}-X_{2l+1}) \right] \label{scale:lempro:fourpoint}
\)
where the index $i,j,k,l$ are the discretized index of the lines $\g_1,\dots ,\g_4$, and $X_r$ is the indicator event that there is a horizontal dimer crossing the corresponding vertical line. We can approximate 
\(
	\E \left[(X_{2i}-X_{2i+1}) (X_{2j} -X_{2j+1}) (X_{2k}-X_{2k+1})(X_{2l}-X_{2l+1}) \right]
\)
using Lemma \ref{scale:lem:invkastentries} and then follow the same approach given in Lemma \ref{scale:lem:twopoint}, i.e. we can write
\begin{align}
	&\sum_{i_1,j_1,k_1,l_1=0}^1 (-1)^{i_1+j_1+k_1+l_1} \nonumber \\
	&\det \left( \begin{array}{cccc}
	0 & K^{-1}(w_{i_1}, b_{j_1})  & K^{-1}(w_{i_1},b_{k_1}) & K^{-1} ( w_{i_1},b_{l_1})   \\
	K^{-1}(w_{j_1}, b_{i_1}) & 0 & K^{-1}(w_{j_1},b_{k_1}) & K^{-1} ( w_{j_1},b_{l_1}) \\
	K^{-1}(w_{k_1}, b_{i_1}) & K^{-1}(w_{k_1},b_{j_1}) &0 & K^{-1} ( w_{k_1},b_{l_1}) \\
	K^{-1}(w_{l_1}, b_{i_1}) & K^{-1}(w_{l_1},b_{j_1}) & K^{-1} ( w_{l_1},b_{k_1}) & 0 
	\end{array} \right) 
\end{align}
with $\lim_{\e \to 0} 1/\e |w_{m_1}-b_{n_1}|=|m-n|$.  Each of the entries of the above matrix can be approximated using Lemma \ref{scale:lem:invkastentries} and each term contains a factor or $\e$.  We can expand out the above determinants and (\ref{scale:lempro:fourpoint}) is a Riemann sum which converges in the limit $\e \to 0$ to a sum of products of Bessel functions.

In order to prove Lemma \ref{scale:lem:fourpoint}, it remains to subtract the sum of the products of the two point correlation function.  This leaves the integral of some linear combination of Bessel functions which are exactly the contributions of the 4-cycles in the above 4 by 4 determinant, i.e. $ \int_{\g_1} \dots \int_{\g_4} f_{(\l_1,\l_2)}(x_1,x_2,x_3,x_4) dx_4 \dots dx_1$.  The full expression of $f_{(\l_1,\l_2)}(x_1,x_2,x_3,x_4)$ can be found in the Appendix, equation (\ref{appendix:mess}).  Each term of $f_{(\l_1,\l_2)}(x_1,x_2,x_3,x_4)$ is of the form 
\(
	\pm 4\frac{ |\l|^4}{\pi^4} B_K(i_1, |\l| z_1)B_K(i_2, |\l| z_2)B_K(i_3, |\l| z_3)B_K(i_4, |\l| z_4)
\)
where each $i_k \in \{ 0, 1\}$ for $1 \leq k \leq 4$, $i_1 +i_2 +i_3 +i_4$ is divisible by 2 and $z_k=x_r-x_s$ for $1\leq s<r\leq 4$ .  Notice that $f_{(\l_1,\l_2)}(x_1,x_2,x_3,x_4)$ is a function of $\sqrt{\l_1^2+\l_2^2}$ and is non-zero for $|\lambda|>0$. 

Taking the generalized series expansion around $|\l|=0$ (using the generalized series expansion of the Bessel functions), as  $B_K(1,z)$ is approximately $1/z+z/4 (-1+2E_G-\log4+2\log(z)) +O(z^2)$ and $B_K(0,z)=\log(z)-E_G +\log(2)+O(z^2)$ where $E_G$ is Euler's constant, we find that the highest order (with respect to $|\l |$) of the integrand is  $O(-| \l |^2 \log | \l |)$.  Using the dominate convergence theorem, the left hand side of (\ref{scale:fourpoint})   tends to zero as $|\l|$ tends to zero.

\end{proof}

\begin{proof}[Proof of Theorem \ref{intro:thm:non-gauss}]
Lemma \ref{scale:lem:fourpoint} and Lemma \ref{appendixlemma} given in the Appendix shows that we can find intervals $\g_1, \dots,\g_4$ such that
\(
	S_4(\g_1, \dots, \g_4) \not = \sum_{i,j=1}^4 \prod_{i \not = j} S_2(\g_i,\g_j.)
\)
This means that the correlation functions do not satisfy Wick's theorem (that the moments of the normal can be written as the permanent of the Green's function).  This proves the theorem.
\end{proof}

\begin{coro}
For $\l>0$, in the scaling window $(\e, \e)$ the height fluctuations of $\mu_{SO}((1-\l \e)^2)$ on the plane are non-Gaussian. 
\end{coro}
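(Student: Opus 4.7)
The plan is to reduce the corollary to Theorem \ref{intro:thm:non-gauss} via the square-octagon to square-grid bijection established in Lemma \ref{thermo:lem:squareoct}, so that the Wick-type violation already proved for $\mu_f$ is transported back to $\mu_{SO}$.

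First, I would invoke Lemma \ref{thermo:lem:squareoct} to identify $\mu_{SO}((1-\l\e)^2)$ with a flipped dimer model $\mu_f(1, s, s, 1)$ on the square grid, where $s$ is determined by the urban-renewal correspondence $t = s^2/2$ together with the P-equivalence to $\mu_d(1, s^2, s^2, 1)$ recorded in the lemma. By Lemma \ref{trees:urbanunchange}, the urban renewal is a measure-preserving bijection for the height function, so the height function of $\mu_{SO}((1-\l\e)^2)$ has the same distribution as that of the associated flipped model on the dual square grid obtained after urban renewal.

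Next, I would apply Theorem \ref{intro:thm:non-gauss} in the symmetric case $\l_1 = \l_2$. That theorem, via Lemma \ref{scale:lem:fourpoint} and the non-vanishing of $f_{(\l_1,\l_2)}$ from the appendix, produces four disjoint intervals $\g_1,\dots,\g_4$ on a common vertical line such that
\begin{equation*}
S_4(\g_1, \dots, \g_4) \not = \sum_{i,j=1}^4 \prod_{i \not = j} S_2(\g_i, \g_j),
\end{equation*}
so Wick's theorem fails for the limiting height field of $\mu_f(1, 1-\l\e, 1-\l\e, 1)$. Because the identification from the first step preserves the joint law of the height function (up to the deterministic change of coordinates forced by urban renewal), the same violation of Wick's theorem holds for $\mu_{SO}((1-\l\e)^2)$ in the scaling window $(\e,\e)$, and hence its fluctuations are non-Gaussian.

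The main obstacle is aligning the two parametrizations: the bijection $t = s^2/2$ must be combined with the Theorem \ref{intro:thm:non-gauss} normalization $\mu_f(1, 1-\l_1\e, 1-\l_2\e, 1)$, which forces the perturbation parameter to enter at order $\e$ around $s = 1$. One must therefore track how the perturbation $(1-\l\e)^2$ on the square-octagon side translates into a perturbation of the form $1 - \mu\e$ on the square-grid side, after absorbing any multiplicative constants arising from the lattice rescaling produced by urban renewal into a redefinition of $\l$. Once this bookkeeping is done, the corollary follows from Theorem \ref{intro:thm:non-gauss} with no further computation.
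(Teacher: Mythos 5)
Your proposal is correct and follows essentially the same route as the paper: the paper's proof is precisely the combination of Lemma \ref{thermo:lem:squareoct} and Lemma \ref{trees:urbanunchange} to identify the height fluctuations of $\mu_{SO}((1-\l\e)^2)$ with those of $\mu_f(1,1-\l\e,1-\l\e,1)$, followed by an application of Theorem \ref{intro:thm:non-gauss}. Your additional attention to the $t=s^2/2$ parametrization bookkeeping is a reasonable elaboration but does not change the argument.
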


\begin{proof}
By Lemma \ref{thermo:lem:squareoct} and Lemma \ref{trees:urbanunchange}, the height fluctuations of $\mu_{SO}((1-\l \e)^2)$ are equal in distribution to the height fluctuations of $\mu_f (1,1-\l \e,1-\l \e,1)$, which are non-Gaussian by Theorem \ref{intro:thm:non-gauss}.

\end{proof}

The first part of Theorem \ref{intro:thm:non-gauss} also holds when $r_1 \not \to 1$, $r_4 \not \to 1$ as $\e \to 0$ but $r_1-r_3$ and $r_2-r_4$ is $O(\e)$.  If $r_1 \to k_2$ and $r_4 \to k_1$, the argument in the Green's function becomes 
\(
	\half \sqrt{\l_1^2+\l_2^2}\sqrt{x^2 k_1^2 + y^2 k_2^2}.
\)

 The arguments in the Green's function change by a multiplicative factor and hence, the derivative terms in the inverse Kasteleyn entries change by multiplicative factors.  However, the same proof for the first part of Theorem \ref{intro:thm:non-gauss} holds, but with these extra multiplicative factors attached to the derivatives of the Green's function.

\subsection{Amoeba Interpretation}

The amoeba for the flipped dimer model $\mu_f(1,1-\l_1 \e, 1-\l_2 \e,1)$, consists of an ellipse with the intercept with the line $e^{|w|}=0$ given by 

\(
	\pm \log \left[\frac{2-2 \e \lambda _2+\e^2 \left(\lambda _1^2+\lambda _2^2\right)-\sqrt{\e^2 \left(\lambda _1^2+\lambda _2^2\right) \left(\e^2 \lambda _1^2+\left(2-\e \lambda _2\right){}^2\right)}}{2-
2 \e \lambda _2}\right]
\)
and the intercept with the line $e^{|z|}=0$ given by 
\(	
 	\pm \log \left[\frac{2+2 \epsilon  \lambda _1+\epsilon ^2 (\lambda _1^2+ \lambda _2^2)-\sqrt{\e^2 \left(\lambda _1^2+\lambda _2^2\right) \left(\e^2 \lambda _2^2+\left(2+\e \lambda _1\right){}^2\right)}}{2+2 \epsilon  \lambda _1}\right].
\)
An expansion of these terms gives 
\(
	\pm \e \sqrt{\l_1^2+\l_2^2}\pm \e^2 \l_2 \sqrt{\l_1^2 + \l_2^2}
\)
and 
\(
	\pm \e \sqrt{\l_2^2+\l_2^2}\pm \e^2 \l_1 \sqrt{\l_1^2 + \l_2^2}.
\)
Rescaling the ellipse (by $\e$) and as $\e$ tends to zero, the ellipse tends to a circle with radius $\sqrt{\l_1^2+\l_2^2}$.    For measures of the form $\mu_f(r_1(\e),r_2(\e),r_3(\e),r_4(\e))$, with $\lim_{\e \to 0} r_1 \not = \lim_{\e \to 0} r_4$, the bounded region is an ellipse, with foci $O(\e)$ provided $r_1 - r_3$ and $r_2 -r_4$ is $O(\e)$.  

As the height fluctuations of $\mu_f(1,1-\l_1 \e, 1-\l_2 \e,1)$ and $\mu_d(1,(1-\l_1 \e)^2, (1-\l_2 \e)^2,1)$ are equivalent, then the fluctuations at the boundary of the gaseous region and the center of the gaseous region are equivalent in the scaling window.  In fact, the height fluctuations of the whole gaseous region is equivalent which can be seen by gauge transformations.

\subsection*{Acknowledgements}:  I would like to particularly thank Richard Kenyon for the many fruitful discussions which have led to this paper. I would also like to thank David Brydges for discussions of statistical mechanical models, Scott Sheffield for some very useful suggestions, C\'{e}dric Boutillier, Benjamin Young and Adrien Kassel for very many useful comments on this paper. Supported/Partially supported by the grant KAW 2010.0063 from the Knut and Alice Wallenberg Foundation.

\begin{appendix}

\section{}

In the appendix, we present the function from Lemma \ref{scale:lem:fourpoint}.  For $y_1=x_2-x_1$, $y_2=x_3-x_2$ and $y_3=x_4-x_3$, we have

\begin{align}
& \pi^4 f_{\l_1,\l_2}(x_1,x_2,x_3,x_4) = \nonumber \\
&-4 |\l|^4 B_K[0, |\l| y_1] B_K[0, |\l| (y_1 + y_2)] B_K[0,  |\l| y_3] B_K[0, |\l| (y_2 + y_3)] \nonumber \\ &- 
  4 |\l|^4 B_K[0, |\l| y_1] B_K[0, |\l| y_2] B_K[0, |\l| y_3] B_K[0,  |\l| (y_1 + y_2 + y_3)] \nonumber \\&- 
  4 |\l|^4 B_K[0, |\l| y_2] B_K[0, |\l| (y_1 + y_2)] B_K[0, |\l| (y_2 + y_3)] B_K[0, |\l| (y_1 + y_2 + y_3)] \nonumber \\ &+ 
  4 |\l|^4 B_K[0, |\l| y_3] B_K[0, |\l| (y_1 + y_2 + y_3)] B_K[1, |\l| y_1] B_K[1, |\l| y_2] \nonumber \\ &- 
  4 |\l|^4 B_K[0, |\l| y_3] B_K[0, |\l| (y_2 + y_3)] B_K[1, |\l| y_1] B_K[1, |\l| (y_1 + y_2)]\nonumber \\ & - 
 4 |\l|^4 B_K[0, |\l| (y_2 + y_3)] B_K[0, |\l| (y_1 + y_2 + y_3)] B_K[1, |\l| y_2] B_K[1, |\l| (y_1 + y_2)] \nonumber \\ &+ 
  4 |\l|^4 B_K[0, |\l| (y_1 + y_2)] B_K[0, |\l| (y_2 + y_3)] B_K[1, |\l| y_1] B_K[1, |\l| y_3] \nonumber \\ & - 
  4 |\l|^4 B_K[0, |\l| y_2] B_K[0, |\l| (y_1 + y_2 + y_3)] B_K[1, |\l| y_1] B_K[1, |\l| y_3]  \nonumber \\ & + 
  4 |\l|^4 B_K[0, |\l| y_1] B_K[0, |\l| (y_1 + y_2 + y_3)] B_K[1, |\l| y_2] B_K[1, |\l| y_3] \nonumber \\ &+ 
  4 |\l|^4 B_K[0, |\l| y_1] B_K[0, |\l| (y_2 + y_3)] B_K[1, |\l| (y_1 + y_2)] B_K[1, |\l| y_3] \nonumber \\ &+ 
  4 |\l|^4 B_K[0, |\l| (y_1 + y_2)] B_K[0, |\l| y_3] B_K[1,  |\l| y_1] B_K[1, |\l| (y_2 + y_3)] \nonumber \\ &- 
  4 |\l|^4 B_K[0, |\l| (y_1 + y_2)] B_K[0, |\l| (y_1 + y_2 + y_3)] B_K[1, |\l| y_2] B_K[1, |\l| (y_2 + y_3)] \nonumber \\ &+ 
  4 |\l|^4 B_K[0, |\l| y_1] B_K[0, |\l| y_3] B_K[1,  |\l| (y_1 + y_2)] B_K[1, |\l| (y_2 + y_3)]\nonumber \\ & - 
  4 |\l|^4 B_K[0, |\l| y_2] B_K[0, |\l| (y_1 + y_2 + y_3)] B_K[1,  |\l| (y_1 + y_2)] B_K[1, |\l| (y_2 + y_3)] \nonumber \\ &- 
  4 |\l|^4 B_K[0, |\l| y_1] B_K[0, |\l| (y_1 + y_2)] B_K[1, |\l| y_3] B_K[1, |\l| (y_2 + y_3)] \nonumber \\ &- 
  4 |\l|^4 B_K[1, |\l| y_1] B_K[1, |\l| (y_1 + y_2)] B_K[1,  |\l| y_3] B_K[1, |\l| (y_2 + y_3)] \nonumber \\ &- 
  4 |\l|^4 B_K[0, |\l| y_2] B_K[0, |\l| y_3] B_K[1, |\l| y_1] B_K[1,  |\l| (y_1 + y_2 + y_3)] \nonumber \\ &+ 
  4 |\l|^4 B_K[0, |\l| y_1] B_K[0, |\l| y_3] B_K[1, |\l| y_2] B_K[1, |\l| (y_1 + y_2 + y_3)]\nonumber \\ & - 
  4 |\l|^4 B_K[0, |\l| (y_1 + y_2)] B_K[0, |\l| (y_2 + y_3)] B_K[1,  |\l| y_2] B_K[1, |\l| (y_1 + y_2 + y_3)] \nonumber \\ &- 
  4 |\l|^4 B_K[0, |\l| y_2] B_K[0, |\l| (y_2 + y_3)] B_K[1, |\l| (y_1 + y_2)] B_K[1, |\l| (y_1 + y_2 + y_3)] \nonumber \\ &- 
  4 |\l|^4 B_K[0, |\l| y_1] B_K[0, |\l| y_2] B_K[1, |\l| y_3] B_K[1,  |\l| (y_1 + y_2 + y_3)]\nonumber \\ & + 
  4 |\l|^4 B_K[1, |\l| y_1] B_K[1, |\l| y_2] B_K[1, |\l| y_3] B_K[1, |\l| (y_1 + y_2 + y_3)] \nonumber \\ &- 
  4 |\l|^4 B_K[0, |\l| y_2] B_K[0, |\l| (y_1 + y_2)] B_K[1,  |\l| (y_2 + y_3)] B_K[1, |\l| (y_1 + y_2 + y_3)] \nonumber \\ &- 
  4 |\l|^4 B_K[1, |\l| y_2] B_K[1, |\l| (y_1 + y_2)] B_K[1,   |\l| (y_2 + y_3)] B_K[1, |\l| (y_1 + y_2 + y_3)] \label{appendix:mess}
\end{align}

We also have the following lemma which is based on a crude argument. 

\begin{lemma}\label{appendixlemma}
There exists $\g_1, \dots,\g_4$, so that 
\(
 	\int_{\g_1} \dots \int_{\g_4}f_{\l_1,\l_2}(x_1,x_2,x_3,x_4)  dx_1 \dots dx_4 >0
\)

\end{lemma}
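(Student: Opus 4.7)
The plan is to reduce the statement to a pointwise positivity claim for $f_{(\lambda_1, \lambda_2)}$ and then localize via continuity. Once a configuration $(x_1^*, x_2^*, x_3^*, x_4^*)$ with $f > 0$ is in hand, I would take $\gamma_i$ to be short disjoint intervals centered at $x_i^*$; since $B_K(n,\cdot)$ is real-analytic on $(0,\infty)$, $f$ is real-analytic on the region $\{x_1 < x_2 < x_3 < x_4\}$ and remains positive on an open product neighborhood of the chosen point, so the iterated integral is strictly positive.

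To produce such a point, I would work in a convenient asymptotic regime. The function $f_{(\lambda_1, \lambda_2)}$ depends on the coordinates and $|\lambda|$ only through the products $|\lambda| y_i$, up to an overall factor $|\lambda|^4$; hence the sign of $f$ at $(y_1, y_2, y_3)$ with drift strength $|\lambda|$ coincides with its sign at $(c y_1, c y_2, c y_3)$ with drift strength $|\lambda|/c$, and I can rescale to make the effective drift as small as convenient. In the small-drift regime, substituting the expansions $B_K(0, z) = -\log(z/2) - E_G + O(z^2 \log z)$ and $B_K(1, z) = 1/z + O(z \log z)$ into (\ref{appendix:mess}) and grouping by order in $|\lambda|$, the $O(1)$ contribution (coming from the three terms with four $B_K(1,\cdot)$ factors) cancels as an algebraic identity in $(y_1, y_2, y_3)$; a quick sanity check at $y_1 = y_2 = y_3 = 1$ gives $\tfrac{1}{3} - \tfrac{1}{4} - \tfrac{1}{12} = 0$, reflecting this identity. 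The $O(|\lambda|^2 (\log |\lambda|)^2)$ contribution must also vanish, since otherwise the leading behavior would exceed the $O(|\lambda|^2 |\log |\lambda||)$ bound already established in the proof of Lemma \ref{scale:lem:fourpoint}. What survives at leading order is therefore $|\lambda|^2 \log(1/|\lambda|) \cdot g(y_1, y_2, y_3)$ for an explicit rational function $g$.

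It then remains to check the sign of $g$ at a convenient point, say $(1,1,1)$. If $g(1,1,1) > 0$, we are done. If $g(1,1,1) < 0$, the same reasoning applied at an asymmetric configuration --- for instance $(y_1, y_2, y_3) = (1, \varepsilon, 1)$ with $\varepsilon$ small, where the factor $B_K(1, |\lambda| \varepsilon)$ blows up and reorganizes the cancellation pattern --- yields $g$ of the opposite sign. By continuity, interpolating between two configurations at which $g$ takes opposite signs produces an open set on which $g > 0$, and we are back to the first case.

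The main obstacle is the bookkeeping: tracking the cancellations of the $O(1)$ and $O(|\lambda|^2 (\log|\lambda|)^2)$ pieces among the 24 terms of (\ref{appendix:mess}) and extracting $g$ explicitly is routine but lengthy. In keeping with the ``crude argument'' phrasing of the appendix, the practical shortcut is to bypass the symbolic computation by evaluating $f$ numerically at a few representative configurations for a single fixed small $|\lambda|$ and directly confirming that a positive value is attained; the continuity and localization arguments above then yield the lemma immediately.
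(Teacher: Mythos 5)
Your reduction steps are sound as far as they go: localizing to short disjoint intervals around a point where $f>0$ and invoking continuity is legitimate, and the observation that $f_{(\l_1,\l_2)}$ depends on $(y_1,y_2,y_3)$ and $|\underline{\l}|$ only through an overall factor $|\underline{\l}|^4$ times a fixed function of $(|\underline{\l}|y_1,|\underline{\l}|y_2,|\underline{\l}|y_3)$ is a genuinely useful rescaling that the paper does not exploit (the paper instead works at the given drift strength and lets the interval width $1/2^n$ depend on $|\underline{\l}|$). The cancellation of the $O(1)$ part coming from the three products of four $B_K(1,\cdot)$ factors is also a correct identity, consistent with the paper's assertion in Lemma \ref{scale:lem:fourpoint} that the integrand is $O(|\underline{\l}|^2|\log|\underline{\l}||)$.

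The gap is that you never actually determine a sign, and the sign is the entire content of the lemma. Your argument forks on ``if $g(1,1,1)>0$ we are done; if $g(1,1,1)<0$ then an asymmetric configuration yields $g$ of the opposite sign,'' but nothing supports the second branch: a blow-up of $B_K(1,|\underline{\l}|\e)$ could just as well amplify a negative coefficient as reverse it, and Lemma \ref{scale:lem:fourpoint} only guarantees $f\not\equiv 0$, not that $f$ attains positive values (you also do not treat $g(1,1,1)=0$). The closing suggestion to ``confirm numerically'' concedes the point; evaluating $f$ at a few configurations is not a proof unless made rigorous. By contrast the paper's proof, crude as it is, does the sign work: it restricts to $y_1,y_3\in[y_2(2^n-2)/(2^n+2),\,y_2]$, splits (\ref{appendix:mess}) into $g_+-g_-$, bounds $g_+$ from below and $g_-$ from above by evaluating at the endpoints of that range, exhibits an explicit pairing of positive against negative terms whose differences are each bounded below by $e^{-12y_2|\underline{\l}|}$, and then controls the first-order correction in $x=1-(2^n-2)/(2^n+2)$. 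To complete your route you would have to extract the leading coefficient $g$ explicitly and verify its positivity at a concrete point --- which is precisely the step you defer.
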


\begin{proof} 
To show that $\int_{\g_1} \dots \int_{\g_4}f_{\l_1,\l_2}(x_1,x_2,x_3,x_4)  dx_1 \dots dx_4\not=0$ for some choice of intervals $\g_i$, choose $\g_1=(0,1/2^n)$, $\g_2=(1-1/2^n,1)$, $\g_3=(2,2+1/2^n)$ and $\g_4=(3,3-1/2^n)$.  We claim that $ f_{\l_1,\l_2}(x_1,x_2,x_3,x_4)$ is positive over this interval and so the integral over $\g_1, \dots, \g_4$ is non-zero.  


We can bound $y_1$ and $y_3$ in terms of $y_2$ where $y_1, y_2$ and $y_3$ are defined above.  These are given by 
\(
	 y_2\frac{2^n-2}{2^n+2} \leq y_1\leq y_2
\)
and 
\(
	 y_2\frac{2^n-2}{2^n+2} \leq y_3\leq y_2.
\)

 We can separate (\ref{appendix:mess}) into positive terms and negative terms, i.e. we can write (\ref{appendix:mess}) as $g_+(y_1,y_2,y_3)-g_-(y_1,y_2,y_3)$ where $g_-$ and $g_+$ are positive.  A lower bound for the positive terms can be achieved by setting $y_1$ and $y_3$ to $y_2$.  An upper bound of the absolute value of the negative terms can be achieved by setting $y_1$ and $y_3$ to $y_2(2^{n}-2)/(2^{ n}+2)$. In other words, we have
 \(
 	g_+(y_1,y_2,y_3)>g_+(y_2,y_2,y_2)
 \)
 and
 \(
 	g_-(y_1,y_2,y_3)<g_-\left(\frac{2^{n}-2}{2^{ n}+2}y_2,y_2,\frac{2^{n}-2}{2^{ n}+2}y_2\right)
 \)
 The difference between these two bounds is positive provided we take $n=|\lambda| m$ for $|\lambda| \geq 1$ and $n=m/ |\lambda|$ for $|\lambda|<1$ where $m>20$ (these bounds are not tight).  
 
 Indeed, by setting $(2^n-2)/(2^n+2)=1-x$, we can take a series expansion with respect to $x$ between the positive and negative terms, which is given by
 \(
 	g_+(y_2,y_2,y_2)-g_-(y_2,y_2,y_2) - x g_-'(y_2,y_2,y_2) +O(x^2)
 \)
 where the derivative is with respect to $x$.  The $x^0$ term is strictly positive and is bounded below by $e^{- 12 y_2 |\l |}$. This due to comparing the following positive and negative terms of (\ref{appendix:mess}) and noting that the difference in each case is bounded below by $e^{- 12 y_2 |\l |}$.  These differences are given explicitly by
 \(
 4 |\l |^4 B_K[0, |\l | y_2]^2 B_K[1, 2 |\l | y_2]^2 - 
 4 |\l |^4 B_K[0, |\l | y_2]^2 B_K[0, 2 |\l | y_2]^2 
\)
 \(
 4 |\l |^4 B_K[0, 2 |\l | y_2]^2 B_K[1, |\l | y_2]^2 - 
 4 |\l |^4 B_K[0, |\l | y_2]^3 B_K[0, 3 |\l | y_2]
 \)
 \begin{align}
 & 4 |\l |^4 B_K[0, |\l | y_2] B_K[0, 3 |\l | y_2] B_K[1, |\l | y_2]^2 - 4 |\l |^4 B_K[0, |\l | y_2] B_K[0, 2 |\l | y_2]^2 B_K[0, 3 |\l | y_2]  \nonumber \\ 
&- 8 |\l |^4 B_K[0, 2 |\l | y_2] B_K[0, 3 |\l | y_2] B_K[1, |\l | y_2] B_K[
   1, 2 |\l | y_2]  \nonumber \\ 
   & -4 |\l |^4 B_K[0, |\l | y_2]^2 B_K[1, |\l | y_2] B_K[1, 3 |\l | y_2]  \nonumber \\ 
&- 4 |\l |^4 B_K[0, |\l | y_2] B_K[0, 3 |\l | y_2] B_K[1, 2 |\l | y_2]^2
 \end{align}
 \begin{align}
& 4 |\l |^4 B_K[1, |\l | y_2]^3 B_K[1, 3 |\l | y_2] - 4 |\l |^4 B_K[1, |\l | y_2]^2 B_K[1, 2 |\l | y_2]^2 \nonumber \\ 
&- 4 |\l |^4 B_K[0, 2 |\l | y_2]^2 B_K[1, |\l | y_2] B_K[1, 3 |\l | y_2] \nonumber \\ 
&-8 |\l |^4 B_K[0, |\l | y_2] B_K[0, 2 |\l | y_2] B_K[1, 2 |\l | y_2] B_K[1, 3 |\l | y_2]  \nonumber \\ 
&- 4 |\l |^4 B_K[1, |\l | y_2] B_K[1, 2 |\l | y_2]^2 B_K[1, 3 |\l | y_2].
 \end{align}

For $| \l | \geq 1$, the coefficient of the $x^1$ term in the above series expansion is negative and its absolute value is bounded above by $e^{-|\l | y_2}$ for $y_2 \geq 1$.  As $ x$ is smaller than $1/2^{20 \gamma}$, then $f_{\l_1, \l_2}(x_1,x_2,x_3,x_4)$ is positive over $\g_1, \dots, \g_4$ for $| \l | \geq 1$.  For $0< |\l | <1$, the absolute value of the coefficient of $x^1$ is bounded above by some constant $c>2$ for $y_2 \geq 1$.  As $1/2^{20 \gamma}$ is less than $e^{-12 y_2 \gamma}$, the order $x^1$ term is less than the order $x^0$ term.  This means  $f_{\l_1, \l_2}(x_1,x_2,x_3,x_4)>0$ over $\g_1,\dots, \g_4$.

 \end{proof}

\end{appendix}


\begin{thebibliography}{10}

\bibitem{BBK:08}
Michel Bauer, Denis Bernard, and Kalle Kyt{\"o}l{\"a}.
\newblock L{ERW} as an example of off-critical {SLE}s.
\newblock {\em J. Stat. Phys.}, 132(4):721--754, 2008.

\bibitem{Blps:01}
Itai Benjamini, Russell Lyons, Yuval Peres, and Oded Schramm.
\newblock Uniform spanning forests.
\newblock {\em Ann. Probab.}, 29(1):1--65, 2001.

\bibitem{BFS:83}
David~C. Brydges, J{\"u}rg Fr{\"o}hlich, and Alan~D. Sokal.
\newblock The random-walk representation of classical spin systems and
  correlation inequalities. {II}. {T}he skeleton inequalities.
\newblock {\em Comm. Math. Phys.}, 91(1):117--139, 1983.

\bibitem{Cam:09}
Federico Camia, Matthijs Joosten, and Ronald Meester.
\newblock Trivial, critical and near-critical scaling limits of two-dimensional
  percolation.
\newblock {\em J. Stat. Phys.}, 137(1):57--69, 2009.

\bibitem{Fun:05}
Tadahisa Funaki.
\newblock Stochastic interface models.
\newblock 1869:103--274, 2005.

\bibitem{Kas:61}
P.W. Kasteleyn.
\newblock The statistics of dimers on a lattice: I. the number of dimer
  arrangements on a quadratic lattice.
\newblock {\em Physica}, 27:1209--1225, 1961.

\bibitem{Ken:97}
Richard Kenyon.
\newblock Local statistics of lattice dimers.
\newblock {\em Ann. Inst. H. Poincar\'e Probab. Statist.}, 33(5):591--618,
  1997.

\bibitem{Ken:01}
Richard Kenyon.
\newblock Conformal invariance of domino tiling.
\newblock {\em Ann. Probab.}, 28(2):759--795, 2000.

\bibitem{Kenb:00}
Richard Kenyon.
\newblock Long-range properties of spanning trees.
\newblock {\em J. Math. Phys.}, 41(3):1338--1363, 2000.
\newblock Probabilistic techniques in equilibrium and nonequilibrium
  statistical physics.

\bibitem{Ken:00}
Richard Kenyon.
\newblock Dominos and the {G}aussian free field.
\newblock {\em Ann. Probab.}, 29(3):1128--1137, 2001.

\bibitem{Ken:09}
Richard Kenyon.
\newblock Lectures on dimers.
\newblock In {\em Statistical mechanics}, volume~16 of {\em IAS/Park City Math.
  Ser.}, pages 191--230. Amer. Math. Soc., Providence, RI, 2009.

\bibitem{KOS:06}
Richard Kenyon, Andrei Okounkov, and Scott Sheffield.
\newblock Dimers and amoebae.
\newblock {\em Ann. of Math. (2)}, 163(3):1019--1056, 2006.

\bibitem{kpw:00}
Richard~W. Kenyon, James~G. Propp, and David~B. Wilson.
\newblock Trees and matchings.
\newblock {\em Electron. J. Combin.}, 7:Research Paper 25, 34 pp. (electronic),
  2000.

\bibitem{Las:97}
Michael Lashkevich.
\newblock Scaling limit of the six-vertex model in the framework of free field
  representation.
\newblock {\em J. High Energy Phys.}, (10):Paper 3, 18 pp. (electronic), 1997.

\bibitem{law:05}
Gregory~F. Lawler.
\newblock {\em Conformally invariant processes in the plane}, volume 114 of
  {\em Mathematical Surveys and Monographs}.
\newblock American Mathematical Society, Providence, RI, 2005.

\bibitem{Luk:97}
Sergei Lukyanov.
\newblock Form factors of exponential fields in the sine-{G}ordon model.
\newblock {\em Modern Phys. Lett. A}, 12(33):2543--2550, 1997.

\bibitem{MS:09}
N.~Makarov and S.~Smirnov.
\newblock Off-critical lattice models and massive sles.
\newblock 2009.

\bibitem{nw:09}
Pierre Nolin and Wendelin Werner.
\newblock Asymmetry of near-critical percolation interfaces.
\newblock {\em J. Amer. Math. Soc.}, 22(3):797--819, 2009.

\bibitem{sch:00}
Oded Schramm.
\newblock Scaling limits of loop-erased random walks and uniform spanning
  trees.
\newblock {\em Israel J. Math.}, 118:221--288, 2000.

\bibitem{SS:09}
Oded Schramm and Scott Sheffield.
\newblock Contour lines of the two-dimensional discrete {G}aussian free field.
\newblock {\em Acta Math.}, 202(1):21--137, 2009.

\bibitem{she:05}
Scott Sheffield.
\newblock Random surfaces.
\newblock {\em Ast\'erisque}, (304):vi+175, 2005.

\bibitem{she:07}
Scott Sheffield.
\newblock Gaussian free fields for mathematicians.
\newblock {\em Probab. Theory Related Fields}, 139(3-4):521--541, 2007.

\bibitem{Wen:04}
Wendelin Werner.
\newblock {\em Random planar curves and {S}chramm-{L}oewner evolutions}, volume
  1840 of {\em Lecture Notes in Math.}
\newblock Springer, Berlin, 2004.

\bibitem{Wil:96}
David~Bruce Wilson.
\newblock Generating random spanning trees more quickly than the cover time.
\newblock In {\em Proceedings of the {T}wenty-eighth {A}nnual {ACM} {S}ymposium
  on the {T}heory of {C}omputing ({P}hiladelphia, {PA}, 1996)}, pages 296--303,
  New York, 1996. ACM.

\end{thebibliography}

\end{document}